\newtheorem{thm}{Theorem}[section]
\newtheorem{prop}[thm]{Proposition}
\newtheorem{claim}[thm]{Claim}
\newtheorem{algorithm}{Algorithm}
\theoremstyle{definition}
\newtheorem{convention}[thm]{Convention}
\newcommand{\ZZ}{\mathbb{Z}}
\newcommand{\NN}{\mathbb{N}}
\newcommand{\comment}[1]{}
\newcommand{\Semi}{\mathrm{SS}}
\newcommand{\bfc}{\mathbf{c}}
\newcommand{\bfe}{\mathbf{e}}
\newcommand{\bfw}{\mathbf{w}}
\newcommand{\bfx}{\mathbf{x}}
\newcommand{\bfzr}{\mathbf{0}}
\newcommand{\wt}{\widetilde}
\newcommand{\blue}[1]{\textcolor{blue}{#1}}
\newcommand{\red}[1]{\textcolor{red}{#1}}
\newcommand{\Grdab}{G^{r,\alpha,\beta}_d}
\newcommand{\hide}[1]{}
\newtheorem{Definition}[thm]{Definition}
\newenvironment{definition}
  {\begin{Definition}\rm}{\end{Definition}}
\newtheorem{Example}[thm]{Example}
\newenvironment{example}
  {\begin{Example}\rm}{\end{Example}}
\newtheorem{Fact}[thm]{Fact}
\newenvironment{fact}
  {\begin{Fact}\rm}{\end{Fact}}
\newtheorem{Theorem}[thm]{Theorem}
\newenvironment{theorem}
  {\begin{Theorem}\rm}{\end{Theorem}}
\newtheorem{Lemma}[thm]{Lemma}
\newenvironment{lemma}
  {\begin{Lemma}\rm}{\end{Lemma}}
\newtheorem{Remark}[thm]{Remark}
\newenvironment{remark}
  {\begin{Remark}\rm}{\end{Remark}}
\newtheorem{Proposition}[thm]{Proposition}
\newenvironment{proposition}
  {\begin{Proposition}\rm}{\end{Proposition}}
\newtheorem{Corollary}[thm]{Corollary}
\newenvironment{corollary}
  {\begin{Corollary}\rm}{\end{Corollary}}
\theoremstyle{remark}
\newcommand \defnow[1]{\begin{definition}{#1}\end{definition}}
\newcommand \exnow[1]{\begin{example}{#1}\end{example}}
\newcommand \lemnow[1]{\begin{lemma}{#1}\end{lemma}}
\newcommand \remnow[1]{\begin{remark}{#1}\end{remark}}
\newcommand \enumnow[1]{\begin{enumerate}{#1}\end{enumerate}}
\newcommand \itemnow[1]{\begin{itemize}{#1}\end{itemize}}
\title{Combinatorial relations on skew Schur and skew stable Grothendieck polynomials}
\author[M. Chan]{Melody Chan}\address{Department of Mathematics, Brown University, Box
1917, Providence, RI 02912}\email{melody\_chan@brown.edu}
\author[N. Pflueger]{Nathan Pflueger}\address{Department of Mathematics and Statistics, Amherst College, Amherst, MA 01002}\email{npflueger@amherst.edu}
\date{\today}
\begin{document}

\begin{abstract}
We give a combinatorial expansion of the stable Grothendieck polynomials of skew Young diagrams in terms of skew Schur functions, using a new row insertion algorithm for set-valued semistandard tableaux of skew shape. This expansion unifies some previous results: it generalizes a combinatorial formula obtained in earlier joint work with L\'opez Mart\'in and Teixidor i Bigas concerning Brill-Noether curves, and it generalizes a 2000 formula of Lenart and a recent result of Reiner-Tenner-Yong to skew shapes. We also give an expansion in the other direction: expressing skew Schur functions in terms of skew Grothendieck polynomials.

\bigskip

\end{abstract}

\maketitle

\tableofcontents

\section{Introduction}

%The motivation of this paper is a natural question concerning the enumeration of set-valued semistandard tableaux of skew shape.  Namely, 
%Given a skew shape $\sigma$ and a vector $\bfc = (c_1,c_2,\ldots)$ of nonnegative integers, how many semistandard set-valued tableaux of shape $\sigma$ and content $\bfc$ are there?  

Let $\sigma$ be a skew Young diagram (Definition~\ref{def:young}).
The main result of this paper is a new formula for the {\em skew stable Grothendieck polynomial} $G_\sigma$ of Lascoux-Sch\"utzenberger and Fomin-Kirillov  \cite{lascoux-schutzenberger,fomin-kirillov-grothendieck} as a linear combination of skew Schur functions $s_\lambda$ on related shapes $\lambda$. As was demonstrated by Buch, the coefficients of $G_\sigma$  have a combinatorial interpretation in terms of set-valued tableaux \cite{buch-littlewood-richardson}, and
our original motivation for this paper came from a recent geometric result, proved in a companion paper \cite{chan-pflueger-euler}, identifying Euler characteristics of Brill-Noether varieties up to sign as counts of set-valued standard tableaux. %, which we discuss below.

%Since computationally efficient formulas for these coefficients are hard to come by, 
It is natural to ask for a linear expansion of $G_\sigma$ in terms of other symmetric functions, particularly the basis of Schur functions. 
Such an expansion was obtained by Fomin-Greene, who in fact obtained such expansions for a wide class of symmetric functions including stable Grothendieck polynomials associated to arbitrary permutations \cite{fomin-greene-noncommutative}.  (Note that the stable Grothendieck polynomials of $321$-avoiding permutations precisely correspond to stable Grothendieck polynomials of skew shapes as in \cite{buch-littlewood-richardson}, by a theorem of Billey-Jockusch-Stanley \cite{billey-jockusch-stanley-some}.)   Buch's expansion of skew Grothendieck polynomials in terms of Grothendieck polynomials of {\em straight} shapes, along with Lenart's expansion of the latter into Schur functions, provides another route to such an expansion \cite{buch-littlewood-richardson,lenart-combinatorial}.  

Our main theorem expresses $G_\sigma$ instead as a linear combination of {\em skew} Schur functions $s_\lambda$.  
The coefficients of the linear combination have explicit combinatorial interpretations which we provide; they count appropriate auxiliary tableaux.  We state the result below, postponing all definitions to the next section.

\begin{thm}\label{thm:comb}
For any connected skew shape $\sigma$, the skew Grothendieck polynomial $G_\sigma$ admits a linear expansion $$G_\sigma = \sum_{\mu\supseteq \sigma} (-1)^{|B(\mu/\sigma)|} a_{\sigma,\mu} \cdot s_\mu$$
where the $a_{\sigma, \mu}$ are nonnegative integers and the $s_\mu$ are skew Schur functions. Here $a_{\sigma,\mu}$ is the product of the following two integers:
\enumnow{
\item the number of row-weakly-bounded semistandard tableaux of shape $A(\mu/\sigma)$, and
\item the number of row-bounded, reverse row-strict tableaux of shape $B(\mu/\sigma)$.
}
\end{thm}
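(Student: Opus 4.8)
The proof will be bijective, refining Buch's combinatorial model for $G_\sigma$. Recall from \cite{buch-littlewood-richardson} that $G_\sigma=\sum_T(-1)^{|T|-|\sigma|}\,\bfx^T$, the sum ranging over set-valued semistandard tableaux $T$ of shape $\sigma$, where $|T|$ is the number of entries counted with multiplicity and $\bfx^T$ is the content monomial; likewise $s_\mu=\sum_P\bfx^P$ over ordinary semistandard tableaux of skew shape $\mu$. It therefore suffices to construct, for each connected skew shape $\sigma$, a content-preserving bijection
\[
\{\text{set-valued semistandard tableaux of shape }\sigma\}\;\longleftrightarrow\;\bigsqcup_{\mu\supseteq\sigma}\ \mathcal R(\mu/\sigma)\times\{\text{semistandard tableaux of shape }\mu\},
\]
where each $\mathcal R(\mu/\sigma)$ is a finite set with $\#\mathcal R(\mu/\sigma)=a_{\sigma,\mu}$, and such that if $T\mapsto(Q,P)$ with $P$ of shape $\mu$, then $\bfx^T=\bfx^P$ and $|T|-|\sigma|\equiv|B(\mu/\sigma)|\pmod 2$.

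\textbf{Step 1 (the insertion algorithm).} I would define a skew analogue of Schensted row insertion that ``evacuates'' the excess entries of $T$ — everything beyond a fixed canonical subtableau, processed row by row from the bottom and right to left within each row — by bumping each excess entry into the tableau and recording, for each bump, the position of the newly created box. Carried to completion this yields an ordinary skew tableau $P$ of some shape $\mu\supseteq\sigma$ together with a recording object $Q$. The first task is to check that this is well defined: each insertion path stays in the admissible region, the output $P$ is genuinely semistandard of a \emph{skew} shape (the inner boundary may shift, but only in controlled ways), and the multiset of entries is unchanged, so $\bfx^T=\bfx^P$. Carrying out these checks in the skew setting — where one loses the tools (Greene's theorem, commutation of insertions) convenient for straight shapes — is the technical backbone of the construction.

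\textbf{Step 2 (inversion and fibers).} The recording object $Q$ should contain exactly the data needed to reverse-bump $P$ back to $T$, which makes the algorithm a bijection onto its image. The crucial step, and the one I expect to be the main obstacle, is the fiber analysis: for a fixed target shape $\mu$, the set of valid recording objects must be \emph{independent of the insertion tableau} $P$ and naturally identified with $\mathcal R(\mu/\sigma)$. Here one shows that the new boxes created while clearing a single ``layer'' of excess entries have strictly decreasing column coordinates, forcing the reverse row-strict condition, and that each is bounded by the row (or column) it was inserted from, forcing the boundedness condition; and that sorting the boxes of $\mu/\sigma$ according to which mechanism produced them — genuine extension of a row versus reorganization relative to the inner boundary of $\sigma$ — cuts $\mu/\sigma$ into the two sub-shapes $A(\mu/\sigma)$ and $B(\mu/\sigma)$, on which the admissible recording data are exactly the row-weakly-bounded semistandard tableaux, respectively the row-bounded reverse row-strict tableaux, of the theorem. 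This $P$-independence and product decomposition of the fiber is where connectedness of $\sigma$ enters and where the argument genuinely departs from Lenart's straight-shape analysis.

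\textbf{Step 3 (the sign, and checks).} By content preservation $|T|-|\sigma|=|\mu/\sigma|$, so the parity identity $|T|-|\sigma|\equiv|B(\mu/\sigma)|\pmod 2$ reduces to showing that the portion of $\mu/\sigma$ recorded by $A(\mu/\sigma)$ contributes evenly — equivalently, that the ``$A$-type'' excess entries of $T$ occur in sign-cancelling pairs — which I would read off from the pairing structure built into the algorithm. Finally, as consistency checks I would verify that when $\sigma$ is a straight shape the set $A(\mu/\sigma)$ is empty and the bijection recovers Lenart's expansion \cite{lenart-combinatorial}, and that restricting to \emph{standard} set-valued tableaux recovers the Brill--Noether count of the companion paper \cite{chan-pflueger-euler}. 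To summarize: the heart of the proof is Step 2, namely controlling skew row-insertion paths precisely enough to see that the recording data depends only on $\mu/\sigma$ and factors as the advertised product of tableau counts.
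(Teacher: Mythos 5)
There is a genuine gap, and it is structural: the expansion in Theorem~\ref{thm:comb} cannot be realized by a single content-preserving bijection of the kind you propose. If $T\mapsto (Q,P)$ with $P$ of shape $\mu$ and $\bfx^{c(T)}=\bfx^{c(P)}$, then $|T|-|\sigma|=|\mu|-|\sigma|=|A(\mu/\sigma)|+|B(\mu/\sigma)|$, so your parity requirement $|T|-|\sigma|\equiv|B(\mu/\sigma)|\pmod 2$ forces $|A(\mu/\sigma)|$ to be even whenever $\mathcal{R}(\mu/\sigma)\neq\emptyset$. But the coefficients $a_{\sigma,\mu}$ are nonzero for shapes with $|A(\mu/\sigma)|$ odd: take $\sigma=(2,2)/(1)$ and $\mu=(2,2)$, so that $A(\mu/\sigma)$ is the single box $(1,1)$; its unique row-weakly-bounded semistandard filling (label $1$) gives $a_{\sigma,\mu}=1$. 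Every set-valued tableau of shape $\sigma$ that your bijection would send to this $\mu$ carries sign $(-1)^{|T|-|\sigma|}=-1$, while the theorem asserts the coefficient of $s_\mu$ is $(-1)^{|B(\mu/\sigma)|}=+1$. Equivalently, Theorem~\ref{t:refinedcount'} expresses the nonnegative number $|\Semi_{\bfc,\bfe}(\sigma)|$ as a genuinely alternating sum $\sum_\mu(-1)^{|A(\mu/\sigma)|}a_{\sigma,\mu,\bfe}\,|\Semi_{\bfc,\mathbf{0}}(\mu)|$, and no bijection proves an identity whose right-hand side has cancellation. Your Step 3, which asks the ``$A$-type'' boxes to ``contribute evenly,'' is precisely the false assertion.

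The paper's proof shows what additional machinery is needed. Its insertion algorithm (Algorithm~\ref{algorithm}) is indeed a bijection, but between two \emph{enlarged} sets: inputs are triples $(\lambda,T',T)$ where $\lambda$ extends $\sigma$ \emph{above} and $T'$ is a recording tableau on $A(\lambda/\sigma)$, and outputs are triples $(\mu,T'',\wt{T})$ where $\mu$ extends $\sigma$ only \emph{below}. The term you actually want, $\lambda=\sigma$, is just one summand on the input side; isolating it (Equation~\eqref{eq:notyetrecursed}) and recursively re-expanding all the other summands produces signed multiplicities $b_{\sigma,\mu,T',T''}$ counting chains of partial fillings of $A(\mu/\sigma)$, and the heart of the proof is showing these collapse to $(-1)^{|A(\mu/\sigma)|}$ exactly when $T'$ is semistandard and to $0$ otherwise --- that is Lemma~\ref{lem:Gseq} on acyclic good/bad partitions of poset cover relations. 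Your Steps 1 and 2 are a reasonable sketch of the algorithm and of the analysis yielding the $B(\mu/\sigma)$ factor (parallel to Lemma~\ref{lem:asclaimed} and Proposition~\ref{p:main}), but the proposal has no mechanism for producing the $A(\mu/\sigma)$ factor or its sign, and that is where the real work of the proof lies.
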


\noindent Here $A(\mu/\sigma)$ and $B(\mu/\sigma)$ are the subshapes of $\mu$ lying above and below $\sigma$, respectively.  In fact, this statement is a specialization of a more general formula for row-refined skew stable Grothendieck polynomials that we obtain in Theorem~\ref{t:refinedcount}.  

Theorem~\ref{thm:comb} generalizes Lenart's theorem from 2000 expanding Grothendieck polynomials for non-skew shapes into non-skew Schur functions \cite{lenart-combinatorial}; in fact, that result is a visible specialization.  We explain this connection in detail in Remark~\ref{rem:connection-to-lenart}.  We also give a theorem in the other direction, Theorem~\ref{t:back}, expressing $s_\sigma$ in terms of polynomials $G_\mu$, for skew shapes $\sigma$.  This generalizes an analogous theorem of Lenart from the same paper.

To be clear, skew Schur functions, since they include Schur functions properly, are evidently not a basis for the space of symmetric functions; thus the coefficients of our expansion are not canonical. 
To provide a point of comparison, a result in the literature that is similar in spirit to Theorem~\ref{thm:comb} is the skew Pieri rule of Assaf-McNamara, in which the product of a skew shape and a rectangle is expressed in terms of other skew shapes \cite[Theorem 3.2]{assaf-mcnamara-pieri}. Again, this expression is necessarily noncanonical, but it is combinatorially natural using an insertion algorithm.  Our proof also uses a new insertion algorithm for skew set-valued semistandard tableaux that is related to previous work of Bandlow-Morse, and indeed our algorithm may be interpreted as extending to the skew case some of their results \cite[\S5]{bandlow-morse-combinatorial}.  In fact, it recalls earlier work of Sagan-Stanley row insertion for skew (non-set-valued) tableaux \cite{sagan-stanley-robinson}, as well as Buch's ``uncrowding'' algorithm on set-valued tableaux \cite[\S 6]{buch-littlewood-richardson}.  We also note that using insertion operations to derive such combinatorial identities has been carried out previously, in the form of Hecke insertion operations studied in \cite{bksty-stable}.

\medskip

{\bf Motivation from geometry.} 
Our original motivation came from a recent result in Brill-Noether theory that we prove in a pair of companion papers (\cite{chan-pflueger-euler}, together with \cite{chan-pflueger-relative} which proves an auxiliary result).  %:\footnote{The preprint \cite{chan-pflueger-euler} is being split into an algebro-geometric paper and this paper, which together subsume it.}
\begin{thm}\cite{chan-pflueger-euler} Fix $r,d\ge 0$ and nondecreasing sequences $\alpha,\beta\in\ZZ^{r+1}_{\ge 0}$.  Let $(X,p,q)$ be a general twice-pointed curve of genus $g$ over an algebraically closed field.
Then the algebraic Euler characteristic of the Brill-Noether variety $\Grdab(X,p,q)$ is 
$$\chi(\Grdab(X,p,q)) = (-1)^{g-|\sigma|} \cdot \#(\text{standard set-valued tableaux on $\sigma$ of content }\{1,\ldots,g\}).$$
\end{thm}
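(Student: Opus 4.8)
The plan is to identify $\chi(\cO_{\Grdab(X,p,q)})$ with an evaluation of the skew stable Grothendieck polynomial $G_\sigma$, and then to invoke Buch's set-valued tableau formula. First I would realize $\Grdab(X,p,q)$ as a generalized degeneracy locus: over $\mathrm{Pic}^d(X)$ one forms the Grassmann bundle of $(r+1)$-dimensional subspaces of the appropriate pushforward bundle (the classical Brill--Noether set-up of Kempf and Kleiman--Laksov), and inside it $\Grdab(X,p,q)$ is cut out by the rank conditions defining the linear series together with the Schubert conditions imposed by the osculating flags at $p$ and at $q$ for the ramification sequences $\alpha,\beta$. This gives a natural determinantal scheme structure whose governing shape is precisely the skew shape $\sigma=\sigma(g,r,d,\alpha,\beta)$, with $|\sigma| = g-\rho$ where $\rho$ is the adjusted Brill--Noether number.

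The key geometric input is that for a \emph{general} twice-pointed curve $(X,p,q)$ this locus has the expected dimension $\rho=g-|\sigma|$ and is Cohen--Macaulay, so that $[\cO_{\Grdab(X,p,q)}]$ in $K$-theory is given by the K-theoretic Giambelli/Thom--Porteous formula attached to its shape --- namely the skew Grothendieck polynomial $G_\sigma$ in the relevant Chern roots. I would establish this by degeneration: specialize $(X,p,q)$ to a chain of $g$ elliptic curves (or loops) with generic gluing data and the two marked points on the two ends, where the limit (linked/limit-linear-series) Brill--Noether scheme admits an explicit combinatorial description --- a union of rational varieties indexed by tableaux on $\sigma$ and glued along rational subvarieties --- from which the expected dimension and Cohen--Macaulayness of the special fiber, hence of the general fiber, can be read off. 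The flatness of the relative Brill--Noether scheme over the base that makes this descent legitimate, and that is what lets $\chi(\cO)$ be computed on either fiber, is the auxiliary result of \cite{chan-pflueger-relative}.

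Granting this, I would finish with a K-theoretic Thom--Porteous computation over the abelian variety $\mathrm{Pic}^d(X)$: pushing $[\cO_{\Grdab(X,p,q)}]$ forward to a point, in stages through the Grassmann bundle and then through $\mathrm{Pic}^d(X)$, and using that the Todd class of $\mathrm{Pic}^d(X)$ is trivial together with the classical value $\int_{\mathrm{Pic}^d(X)}\theta^g = g!$, one finds that $\chi(\cO_{\Grdab(X,p,q)})$ equals the coefficient of $x_1\cdots x_g$ in $G_\sigma$. Buch's expansion of $G_\sigma$ over set-valued semistandard tableaux, with signs $(-1)^{\mathrm{excess}}$ \cite{buch-littlewood-richardson}, then identifies this coefficient with $(-1)^{g-|\sigma|}$ times the number of standard set-valued tableaux on $\sigma$ of content $\{1,\dots,g\}$ --- the excess of each such tableau being exactly $g-|\sigma|$ --- which is the claim. (One can instead carry out the whole computation on the degenerate curve: a Mayer--Vietoris/\v{C}ech spectral sequence for the cover of its Brill--Noether scheme by the rational components, each nonempty intersection being $\cO$-acyclic, reduces $\chi(\cO)$ to the Euler characteristic of the nerve of the cover, which one then matches with the set-valued tableau count by an induction on $g$ parallel to the box-adding/extra-entry recursion for set-valued tableaux; this is also the route through which the present paper's Theorem~\ref{thm:comb} enters most transparently.)

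The hard part will be the middle step: showing that the two-point-ramified Brill--Noether degeneracy locus really has the expected dimension and is Cohen--Macaulay for $X$ general. Classical Brill--Noether technology handles the rank condition and each osculating-flag condition in isolation, but here all three interact, and the regeneration/limit-linear-series arguments have to be upgraded to this relative, ramified setting --- exactly the work carried out in the companion papers. A secondary difficulty is bookkeeping: pinning down the shape $\sigma$ in terms of $(g,r,d,\alpha,\beta)$, checking that the determinantal structure matches the K-theoretic Giambelli formula on the nose, and --- in the degeneration route --- verifying $\cO$-acyclicity of \emph{every} multi-intersection of components so that the spectral sequence collapses to the nerve.
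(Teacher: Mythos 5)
There is nothing in this paper to check your proposal against: the statement is quoted verbatim from the companion paper \cite{chan-pflueger-euler} (with an auxiliary input from \cite{chan-pflueger-relative}) and is given here without proof --- it appears only as motivation for the combinatorial main results. So the most I can do is compare your outline with what the surrounding text reveals about the companion papers' strategy. On that score your sketch is broadly consistent with the intended architecture (realize $\Grdab(X,p,q)$ as a degeneracy locus, compute $\chi(\cO)$ by a K-theoretic pushforward over $\Pic^d(X)$, use flatness from \cite{chan-pflueger-relative} to transfer the computation to a degenerate fiber), but it inverts the logical role of the present paper. Remark~\ref{rem:connection-to-act} and the discussion of \cite{clpt} indicate that what the geometry actually delivers is a determinantal formula in the style of \cite{anderson-chen-tarasca-k-classes}, i.e.\ a signed sum over \emph{standard (non-set-valued) tableaux on larger skew shapes}, and that Theorem~\ref{thm:comb} of this paper is precisely the combinatorial bridge converting that expansion into the set-valued tableau count. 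In your write-up this step is compressed into ``Buch's expansion of $G_\sigma$ then identifies this coefficient,'' which presupposes the substantive geometric claim that the pushed-forward K-class is literally the coefficient of $x_1\cdots x_g$ in $G_\sigma$; if instead one only obtains the determinantal/skew-Schur form, the identity of Theorem~\ref{thm:comb} is doing real work and cannot be omitted. You acknowledge this alternative only parenthetically.

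Beyond that, the step you yourself flag as ``the hard part'' --- expected dimension and Cohen--Macaulayness of the doubly-ramified locus for general $(X,p,q)$, and the flatness that lets $\chi(\cO)$ be computed on a degenerate fiber --- is exactly the content outsourced to \cite{chan-pflueger-relative} and cannot be certified from anything in this paper. As a proof proposal it is therefore a reasonable roadmap rather than a proof, and the one concrete adjustment I would make is to route the final identification through Theorem~\ref{t:refinedcount'} (specialized to content $(1,\ldots,1)$) rather than asserting the K-class equals $G_\sigma$ on the nose.
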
 
\noindent Here $\sigma$ is the skew-shape obtained from an $(r+1)\times (g-d+r)$ rectangle by adding $\alpha_r \le \cdots \le \alpha_0$ boxes down the left side and $\beta_0\ge \cdots \ge \beta_r$ boxes down the right side.  The partitions $\alpha$ and $\beta$ encode some ramification conditions imposed at the two marked points $p,q$ of $X$.  Roughly speaking, from the geometric perspective it is natural to seek formulae for set-valued tableaux in terms of skew Schur functions, which do not give preference to one marked point over the other, rather than (straight) Schur functions, which do.
Indeed, 
our result provides a combinatorial explanation of the main theorem of \cite{anderson-chen-tarasca-k-classes}, as we shall explain further in Remark~\ref{rem:connection-to-act}.    It also generalizes a theorem with L\'opez and Teixidor i Bigas which computes genera of Brill-Noether curves  \cite{clpt}. (That case corresponds to the situation in which there is exactly one more label than the number of boxes.)  
In addition, a recent result of Reiner-Tenner-Yong  
is also a special case of Theorem~\ref{thm:comb}, and in fact, their work inspired some of the results here \cite[Corollary 3.11]{reiner-tenner-yong}. 
%%%%%%%%%%%%%%%%%%%%%%%%%%%%%%%%%%%%%%%%%%%%%%%%%%%%
%%%%%%%%%%%%%%%%%%%%%%%%%%%%%%%%%%%%%%%%%%%%%%%%%%%%

\section{Preliminaries}\label{sec:prelim}

We now give some preliminaries on tableaux.  First we define a skew Young diagram: this is almost the same as the usual definition, except that we only record the set of boxes in a diagram rather than remembering a formal difference of two partitions.
Fix the partial order $\preceq$ on $\ZZ^2$ given by $(x,y)\preceq (x',y')$ if $x\le x'$ and $y\le y'$.  

\begin{definition}\label{def:young}\mbox{}
\begin{enumerate}
\item 
A skew Young diagram is a finite subset $\sigma \subset \ZZ_{>0}^2$ that is closed under taking intervals.  In other words, $\sigma$ has the property that if $(x,y)$ and $(x',y')\in  \sigma$ with $(x,y)\preceq (x',y')$, then $$\{(x'',y''):(x,y)\preceq (x'',y'') \preceq (x',y')\}\subseteq \sigma.$$
\item A skew Young diagram is called a {\em Young diagram} if $\sigma$ is empty or has a unique minimal element.
\end{enumerate}
\end{definition}

Skew Young diagrams are sometimes also called {\em skew shapes}, and skew Young diagrams having a unique minimal element will sometimes be called {\em straight shapes} for emphasis. 
In accordance with the English notation for Young diagrams, we will draw the points of $\ZZ^2$ arranged with $x$-coordinate increasing from left to right, and $y$-coordinate {\em increasing} from top to bottom, e.g.
$$
\begin{array}{ccc}
(1,1) & (1,2) & \cdots \\
(2,1) & (2,2) & \\
\vdots & &
\end{array}
$$
Furthermore, we will draw, and refer to, the members of $\sigma$ as boxes, as usual, and we let $|\sigma|$ denote the number of boxes in $\sigma$.  We shall assume throughout for convenience that $\sigma$ is a connected shape, i.e.~its Hasse diagram is connected (see Remark \ref{rem:disconn} on the disconnected case). %We do not lose generality in doing so, since our main object of study, the Grothendieck polynomial, can be easily recovered from its connected components of an arbitrary shape.  %If instead $\sigma$ has two disconnected pieces $\sigma_1$ and $\sigma_2$, then its Grothendieck polynomial, Definition~\ref{def:schur-groth} below, is  $G_\sigma = G_{\sigma_1} G_{\sigma_2}$.

\begin{definition} \label{def:tableau}
A {\em tableau} of shape $\sigma$ is an assignment $T$ of a positive integer, called a {\em label}, to each box of $\sigma$.  \begin{enumerate}
\item A tableau $T$ of shape $\sigma$ is {\em semistandard} if the rows of $\sigma$ are weakly increasing from left to right, and the columns of $\sigma$ are strictly increasing from top to bottom.
\item A tableau $T$ of shape $\sigma$ is {\em standard} if it is semistandard and furthermore each integer $1,\ldots,|\sigma|$ occurs exactly once as a label.
\end{enumerate}

\end{definition}

\begin{definition}\cite{buch-littlewood-richardson} \label{def:svt}
A {\em set-valued tableau} of shape $\sigma$ is an assignment of a nonempty finite set of positive integers  to each box of $\sigma$. 

Given sets $S,T\subseteq \ZZ_{>0}$, we write $S<T$ if $\max(S)<\min(T)$, and we write $S\le T$ if  $\max(S)\le\min(T)$.  Then we extend the definitions of {\em semistandard} and {\em standard} tableaux to set-valued tableaux.
\begin{enumerate}
\item \label{it:sstd}A set-valued tableau $T$ of shape $\sigma$ is {\em semistandard} if the rows of $\sigma$ are weakly increasing from left to right, and the columns of $\sigma$ are strictly increasing from top to bottom.
\item \label{it:std} A set-valued tableau $T$ of shape $\sigma$ is {\em standard} if it is semistandard and furthermore the labels are pairwise disjoint sets with union $\{1,\ldots,r\}$ for some $r \ge |\sigma|$.
\end{enumerate}
Denote by $\Semi(\sigma)$ the set of all semistandard set-valued tableaux on $\sigma$.
\end{definition}

%There is an evident bijection between tableaux of shape $\sigma$ and set-valued tableaux of shape $\sigma$ in which each label is a one element set.  We will identify these objects without further mention.

Let $\bfc=(c_1,c_2,\ldots)$ be a nonnegative integer sequence that is eventually zero.  We say that a tableau or set-valued tableau $T$ of shape $\sigma$ has content $\bfc = c(T)$ if label $i$ appears exactly $c_i$ times, for all $i$.  
Write $|T| = |c(T)| = \sum c_i$ for the total number of labels.

\begin{definition} \label{def:schur-groth} \mbox{}
\begin{enumerate}
\item
For any skew shape $\sigma$, the {\em skew Schur function} $s_\sigma$ is
$$s_\sigma =\sum_T {\mathbf{x}}^{c(T)}$$
as $T$ ranges over all semistandard fillings of $\sigma.$
\item
For any skew shape $\sigma$, the {\em skew stable Grothendieck polynomial} $G_\sigma$ is
$$G_\sigma =\sum_T (-1)^{|T|-|\sigma|}{\mathbf{x}}^{c(T)}$$
as $T$ ranges over all semistandard set-valued fillings of $\sigma.$
\end{enumerate}
\end{definition}

Given a set-valued tableau $T$ of shape $\sigma$, define the {\em excess} of $T$, denoted $e(T)$, as the vector $\bfe =  (e_1,e_2,\ldots)$ in which $e_i$ records the number of labels in row $i$ in excess of the number of boxes in row $i$.  Therefore $|\sigma|+|e(T)|=|c(T)|$.  

\begin{remark} \label{rem:disconn}
Definition \ref {def:schur-groth} does not require $\sigma$ to be a connected skew shape, but there is little loss of generality in focusing on the connected case, as we do in this paper. If $\sigma$ is a union of two disconnected parts $\sigma_1, \sigma_2$, then a filling $T$ of $\sigma$ is semistandard if and only if the resulting fillings $T_1$ of $\sigma_1$ and $T_2$ of $\sigma_2$ are semistandard, since no box in $\sigma_1$ is comparable by $\preceq$ to a box of $\sigma_2$. Also, $c(T) = c(T_1) + c(T_2)$. It follows that $G_\sigma = G_{\sigma_1} G_{\sigma_2}$. Hence Grothendieck polynomials of disconnected skew shapes factor into those of connected skew shapes.
\end{remark}

\section{Row insertion for skew set-valued tableaux}

We now set notation for a refinement of the Grothendieck polynomial based on the excess statistic, and we prove a theorem expressing it linearly in terms of skew Schur functions. Note that the idea of introducing an additional parameter into the Grothendieck polynomial goes back already to \cite{fomin-kirillov-grothendieck}.

\begin{definition} \label{def:rg} Let $\sigma$ be a skew Young diagram.
We define the {\em row-refined skew stable Grothendieck polynomial} of $\sigma$ to be the power series 
$$RG_\sigma(\mathbf{x};\mathbf{w}) = \sum_{T\in \Semi(\sigma)} (-1)^{|e(T)|} \mathbf{x}^{c(T)} \mathbf{w}^{e(T)}.$$
\end{definition}
\noindent Thus $RG_\sigma(\mathbf{x};\mathbf{1}) = G_\sigma(\mathbf{x})$, so the usual skew stable Grothendieck polynomial is obtained as a specialization.

\begin{definition}Let $\mu$ be a skew Young diagram.
\begin{enumerate}
\item A tableau $T$ of shape $\mu$ is {\em reverse row-strict} if its rows are strictly decreasing from left to right, and its columns are weakly decreasing from top to bottom. 
\item A tableau $T$ of shape $\mu$ is {\em row-bounded}, respectively {\em row weakly-bounded}, if for every box $(i,j)$ in $\mu$, $T(i,j) < i$, respectively $T(i,j)\le i$.
\end{enumerate}
\end{definition}

We henceforth adopt the following convention governing containment of Young diagrams. 
\begin{convention}\label{conv}
Fix $\sigma$ a skew shape; we take the numbering of the rows of $\sigma$ to start at $1$ at the top.  For another skew shape $\lambda$, we write $\lambda \supseteq \sigma$ if every box of $\sigma$ is a box of $\lambda$, {\em and furthermore every box of $\lambda$ is in the same column as some box of $\sigma$}. In other words, we will only consider skew shapes $\lambda\supseteq \sigma$ that occupy the same set of columns as $\sigma$. They are not allowed to extend $\sigma$ to the right or to the left.
\end{convention}

By Convention~\ref{conv}, if $\sigma$ is a connected skew shape and $\lambda\supseteq \sigma$, then $\lambda - \sigma$ consists of a set of boxes above $\sigma$ and a set of boxes below $\sigma$.  Write $A(\lambda/\sigma)$ and $B(\lambda/\sigma)$ for these respective skew Young diagrams; $A$ and $B$ stand for {\em above} and {\em below.}  We emphasize that, contrary to some conventions, $\lambda$ may extend $\sigma$ both above and below.

\begin{theorem}\label{t:refinedcount} For any connected skew shape $\sigma$, 
$$RG_\sigma(\bfx;\bfw) = \sum_{(\mu,\bfe)} (-1)^{| B(\mu/\sigma)|} \,a_{\sigma,\mu,\bfe} \cdot s_\mu(\bfx) \cdot\bfw^\bfe $$
where the sum is over all skew shapes $\mu\supseteq \sigma$ and sequences $\bfe$, and the numbers $a_{\sigma,\mu,\bfe}$ are nonnegative integers.  Specifically, $a_{\sigma,\mu,\bfe}$ is the number of pairs $(T',T'')$ such that
\begin{itemize}
\item $T'$ is a row-weakly bounded semistandard tableau on $A(\mu/\sigma)$, and
\item $T''$ is a reverse-row-strict, row-bounded tableau on $B(\mu/\sigma)$, 
\end{itemize}
satisfying $$c(T') + c(T'') = \bfe.$$
\end{theorem}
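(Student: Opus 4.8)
The plan is to construct an explicit bijection. On the left, $RG_\sigma$ counts semistandard set-valued tableaux $T$ on $\sigma$, weighted by content and excess. On the right, we want to count triples: a skew shape $\mu \supseteq \sigma$, a semistandard (ordinary) filling $S$ of $\mu$ recording the $s_\mu(\bfx)$ contribution, and a pair $(T', T'')$ of auxiliary tableaux on $A(\mu/\sigma)$ and $B(\mu/\sigma)$ recording $\bfw^\bfe$. So the goal is a content-preserving bijection
$$
\Semi(\sigma) \;\longleftrightarrow\; \bigl\{(\mu, S, T', T'') : S \in \Semi_{\text{ord}}(\mu),\ T' \text{ row-weakly-bounded SSYT on } A(\mu/\sigma),\ T'' \text{ reverse-row-strict row-bounded on } B(\mu/\sigma)\bigr\},
$$
under which $c(T) = c(S)$ and $e(T) = c(T') + c(T'')$, with the sign $(-1)^{|e(T)|}$ matching $(-1)^{|B(\mu/\sigma)|}$ modulo the $2$-torsion-free cancellation — actually the signs must match on the nose, so the bijection must force $|e(T)| \equiv |B(\mu/\sigma)| \pmod 2$; I expect in fact $|B(\mu/\sigma)| = |c(T'')|$ and the $A$-part contributes an even or sign-irrelevant amount, so more care is needed here (see below).

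First I would develop the row-insertion algorithm for skew set-valued semistandard tableaux alluded to in the introduction, generalizing Sagan–Stanley insertion and Buch's uncrowding. The idea: given $T \in \Semi(\sigma)$, process the excess labels. When a box in row $i$ carries a set with more than one element (or more generally when row $i$ has excess), bump the extra labels downward into row $i+1$ following a Sagan–Stanley-type rule, creating new boxes below $\sigma$; this produces the shape $B(\mu/\sigma)$ and, reading off which labels were bumped and from where, the reverse-row-strict row-bounded tableau $T''$ on $B(\mu/\sigma)$. Symmetrically — or by a dual pass — excess is resolved upward to produce $A(\mu/\sigma)$ and the row-weakly-bounded tableau $T'$; the asymmetry between "row-bounded" ($<i$) for $B$ and "row-weakly-bounded" ($\le i$) for $A$ should emerge from the difference between strict-column / weak-row conditions above versus below. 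After all excess is absorbed, what remains is an honest semistandard filling $S$ of the enlarged shape $\mu = \sigma \cup A \cup B$, with $c(S) = c(T)$. The bounding conditions on $T', T''$ encode exactly which configurations of bumping are reachable and guarantee injectivity.

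The key steps, in order: (1) define the insertion map $\Semi(\sigma) \to (\mu, S, T', T'')$ precisely, treating rows from (say) bottom to top for the $B$-part and top to bottom for the $A$-part, and verify at each step that semistandardness and the bounding conditions are preserved; (2) define the reverse map, reconstructing $T$ from $(\mu, S, T', T'')$ by reading $T'$ and $T''$ to know where labels were bumped and reversing the bumps; (3) check the two maps are mutually inverse; (4) track the content and excess statistics to confirm $c(T) = c(S)$ and $e(T) = c(T') + c(T'')$; (5) reconcile the signs — confirm that $(-1)^{|e(T)|}$ on the left equals $(-1)^{|B(\mu/\sigma)|}$ times the (necessarily positive) count $a_{\sigma,\mu,\bfe}$ on the right, which requires showing that whenever $(\mu, S)$ is fixed, all contributing $(T', T'')$ have $|c(T')| + |c(T'')| = |e(T)|$ with the same parity as $|B(\mu/\sigma)|$; the cleanest outcome would be $|c(T'')| = |B(\mu/\sigma)|$ forced (the $B$-tableau fills every box below $\sigma$, one excess label per box) while $|c(T')|$ is free but enters with sign $(+1)^{|c(T')|}$, i.e., the $A$-side excess is genuinely signless. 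I would verify this parity bookkeeping carefully since it is where a naive bijection could go wrong.

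The main obstacle I anticipate is getting the insertion algorithm to behave correctly at the boundary between the "above" and "below" regions and at columns where $\sigma$ has short rows — in the skew setting, a bumped label may need to slide past a gap rather than into an adjacent box, and ensuring that the output shape $\mu$ still satisfies Convention~\ref{conv} (occupies exactly the columns of $\sigma$, extending only up and down) while the bumping remains reversible is the delicate part. This is precisely the place where Sagan–Stanley's original skew insertion required care, and layering Buch's set-valued uncrowding on top compounds it; I'd expect the bulk of the proof, and the bulk of the casework, to live in verifying well-definedness and invertibility of the insertion near these skew boundaries. Once the algorithm is pinned down, the statistic-tracking and sign reconciliation should be routine, and Theorem~\ref{thm:comb} then follows by specializing $\bfw = \bfone$ and grouping the $(T', T'')$ counts into the product $a_{\sigma,\mu} = a' \cdot a''$.
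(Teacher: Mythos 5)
There is a genuine gap, and it is exactly at the point you flagged but did not resolve: the sign bookkeeping. Your plan is a direct, sign-preserving bijection from $\Semi(\sigma)$ to quadruples $(\mu,S,T',T'')$. But $T'$ and $T''$ assign exactly one label to each box of $A(\mu/\sigma)$ and $B(\mu/\sigma)$ respectively, so $|c(T')|=|A(\mu/\sigma)|$ is \emph{not} free --- it is forced by $\mu$ --- and hence $|e(T)|=|c(T')|+|c(T'')|=|A(\mu/\sigma)|+|B(\mu/\sigma)|$. The sign $(-1)^{|e(T)|}$ carried by the left side therefore differs from the sign $(-1)^{|B(\mu/\sigma)|}$ on the right by $(-1)^{|A(\mu/\sigma)|}$, and shapes with $|A(\mu/\sigma)|$ odd genuinely occur with $a_{\sigma,\mu,\bfe}\ne 0$. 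Concretely, for $\sigma=\{(1,2),(2,1),(2,2)\}$ and $\bfe=(1,0,\dots)$ the coefficient identity reads $|\Semi_{\bfc,\bfe}(\sigma)| = -|\Semi_{\bfc,\bfzr}(\mu_1)|+|\Semi_{\bfc,\bfzr}(\mu_2)|$ with $\mu_1$ the $2\times 2$ square and $\mu_2=\sigma\cup\{(3,1)\}$; for $\bfc=(1,1,1,1)$ this is $3=-2+5$, whereas the unsigned count your bijection would prove is $2+5=7\ne 3$. So no direct bijection of the kind you describe can exist; genuine cancellation is unavoidable.

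The paper's route is structurally different on this point. Its insertion algorithm pushes excess labels \emph{only downward}; the ``above'' region $A$ never arises from bumping upward but from the shape of the argument: the bijection (Proposition~\ref{p:main}) is between data $(\lambda,T',T)$ with $\lambda$ extending $\sigma$ \emph{above} and data $(\mu,T'',\wt T)$ with $\mu$ extending $\sigma$ \emph{below}, which yields a recursion rather than the theorem directly. Unwinding the recursion produces signed coefficients $b_{\sigma,\mu,T',T''}$ counting chains of tableaux building up $T'$, and a separate cancellation lemma (Lemma~\ref{lem:Gseq}, on acyclic partitions of the cover relations of a poset into ``good'' and ``bad'') shows these collapse to $(-1)^{|A(\mu/\sigma)|}$ when $T'$ is semistandard and to $0$ otherwise. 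To repair your proposal you would need to replace the direct bijection either by this recursion-plus-cancellation scheme or by an explicit sign-reversing involution on the right-hand side; the insertion algorithm alone, however carefully you handle the skew boundaries, cannot produce the minus signs.
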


For convenience, we record the coefficient-by-coefficient interpretation of Theorem~\ref{t:refinedcount}.  Let $\Semi_{\bfc,\bfe}(\sigma)$ denote the set of semistandard set-valued fillings of $\sigma$ of content $\bfc$ and excess $\bfe$.

\begin{theorem} \label{t:refinedcount'} Let $\sigma$ be any connected skew shape, and fix sequences $\bfc$ and $\bfe$.  Then 
$$|\Semi_{\bfc,\bfe}(\sigma)| = \sum_{\mu \supseteq \sigma}  (-1)^{|A(\mu/\sigma)|} \cdot a_{\sigma,\mu,\bfe} \cdot |\Semi_{\bfc, \mathbf{0}} (\mu)|,$$
where $a_{\sigma,\mu,\bfe}$ are the nonnegative integers defined in Theorem~\ref{t:refinedcount}.
\end{theorem}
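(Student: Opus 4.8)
The plan is to obtain Theorem~\ref{t:refinedcount'} as the coefficient-by-coefficient form of Theorem~\ref{t:refinedcount}, together with one small sign-bookkeeping step. First I would extract the coefficient of $\bfx^\bfc\bfw^\bfe$ from both sides of the identity in Theorem~\ref{t:refinedcount}. On the left side, unwinding Definition~\ref{def:rg}, each $T\in\Semi_{\bfc,\bfe}(\sigma)$ contributes $(-1)^{|e(T)|}\bfx^{c(T)}\bfw^{e(T)}=(-1)^{|\bfe|}\bfx^\bfc\bfw^\bfe$, so that coefficient is $(-1)^{|\bfe|}\,|\Semi_{\bfc,\bfe}(\sigma)|$. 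On the right side I would first take the $\bfw^\bfe$-part, namely $\sum_{\mu\supseteq\sigma}(-1)^{|B(\mu/\sigma)|}a_{\sigma,\mu,\bfe}\,s_\mu(\bfx)$, and then its $\bfx^\bfc$-coefficient. For this I need that $[\bfx^\bfc]\,s_\mu$ equals the number of semistandard fillings of $\mu$ of content $\bfc$ (Definition~\ref{def:schur-groth}), and that this number is $|\Semi_{\bfc,\mathbf{0}}(\mu)|$: a set-valued semistandard filling of excess $\mathbf{0}$ has, in each row, exactly as many labels as boxes, hence (all boxes being nonempty) exactly one label per box, so it is an ordinary semistandard filling, and conversely. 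All sums in sight are finite, since $a_{\sigma,\mu,\bfe}$ (and $[\bfx^\bfc]s_\mu$) vanishes for all but finitely many $\mu$. Equating the two coefficients gives
$$(-1)^{|\bfe|}\,|\Semi_{\bfc,\bfe}(\sigma)|=\sum_{\mu\supseteq\sigma}(-1)^{|B(\mu/\sigma)|}\,a_{\sigma,\mu,\bfe}\,|\Semi_{\bfc,\mathbf{0}}(\mu)|.$$

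It remains to convert $(-1)^{|\bfe|}$ into the signs appearing in Theorem~\ref{t:refinedcount'}. Here I would use the explicit description of $a_{\sigma,\mu,\bfe}$: whenever it is nonzero there exists a pair $(T',T'')$ of ordinary (single-valued) tableaux on $A(\mu/\sigma)$ and $B(\mu/\sigma)$ with $c(T')+c(T'')=\bfe$, and comparing total sizes forces $|\bfe|=|A(\mu/\sigma)|+|B(\mu/\sigma)|$. Hence, on the support of $a_{\sigma,\mu,\bfe}$, one has $(-1)^{|\bfe|}=(-1)^{|A(\mu/\sigma)|}(-1)^{|B(\mu/\sigma)|}$; multiplying the displayed identity through by $(-1)^{|\bfe|}$ and absorbing it into the sum replaces $(-1)^{|B(\mu/\sigma)|}$ by $(-1)^{|A(\mu/\sigma)|}$, which is exactly the statement of Theorem~\ref{t:refinedcount'}.

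I do not expect a genuine obstacle: all the mathematical content has already been placed in Theorem~\ref{t:refinedcount}, and what is left is the routine verification that extracting $\bfx^\bfc\bfw^\bfe$-coefficients from the $\bfw$-refined identity is legitimate, plus the elementary observation that the two sign conventions ($A$ versus $B$) agree up to the global factor $(-1)^{|\bfe|}$ already built into the left-hand side. The only points requiring brief care are the identification $[\bfx^\bfc]\,s_\mu=|\Semi_{\bfc,\mathbf{0}}(\mu)|$ and the finiteness of the sums, both immediate from the definitions.
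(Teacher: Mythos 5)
Your coefficient-extraction and sign-conversion argument is correct as far as it goes: extracting the $\bfx^\bfc\bfw^\bfe$-coefficient, identifying $[\bfx^\bfc]s_\mu$ with $|\Semi_{\bfc,\mathbf{0}}(\mu)|$, and observing that $|\bfe|=|A(\mu/\sigma)|+|B(\mu/\sigma)|$ on the support of $a_{\sigma,\mu,\bfe}$ is exactly the content of the paper's one-sentence remark that Theorems~\ref{t:refinedcount} and~\ref{t:refinedcount'} are equivalent. The problem is that this is circular as a proof of Theorem~\ref{t:refinedcount'}: in the paper, Theorem~\ref{t:refinedcount} is never established independently. The logical order is the reverse of what you assume --- the paper proves Theorem~\ref{t:refinedcount'} directly, and Theorems~\ref{t:refinedcount} and~\ref{thm:comb} are then deduced from it. By ``placing all the mathematical content in Theorem~\ref{t:refinedcount},'' you have placed it in a statement whose only proof in the paper passes through the very theorem you are asked to prove.

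The missing content is substantial. The paper's actual proof of Theorem~\ref{t:refinedcount'} runs as follows: the skew set-valued row insertion algorithm (Algorithm~\ref{algorithm}) yields the bijection of Proposition~\ref{p:main}, hence the identity $\sum_{(\lambda,T')}|\Semi_{\bfc,\bfe-c(T')}(\lambda)|=\sum_{(\mu,T'')}|\Semi_{\bfc,\mathbf{0}}(\mu)|$; isolating the $\lambda=\sigma$ term and recursing expresses $|\Semi_{\bfc,\bfe}(\sigma)|$ as $\sum b_{\sigma,\mu,T',T''}|\Semi_{\bfc,\mathbf{0}}(\mu)|$ over \emph{arbitrary} row-weakly-bounded fillings $T'$ of $A(\mu/\sigma)$; and the signed-count Lemma~\ref{lem:Gseq} on $G$-sequences of order ideals (applied to an acyclic good/bad partition of the cover relations of $A(\mu/\sigma)$ determined by $T'$) shows that $b_{\sigma,\mu,T',T''}$ equals $(-1)^{|A(\mu/\sigma)|}$ when $T'$ is semistandard and $0$ otherwise, which gives the stated coefficients. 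None of this appears in your proposal, so as written it does not constitute a proof. If you instead take Theorem~\ref{t:refinedcount} as given from some independent source, your argument is fine, but relative to this paper the burden of proof has simply been moved, not discharged.
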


Thus Theorems ~\ref{t:refinedcount} and~\ref{t:refinedcount'} are equivalent.

\remnow{The change from $B(\mu/\sigma)$ in Theorem~\ref{t:refinedcount} to $A(\mu/\sigma)$  in Theorem~\ref{t:refinedcount'} is not accidental; it arises from the definition of $RG$ as a {\em signed} generating function for set-valued semistandard tableaux.}

Then, by specializing to $\bfw = 1$ in Theorem~\ref{t:refinedcount}, we obtain Theorem~\ref{thm:comb}.

% Removed since it is redundent with the theorem in the intro.
%the following expansion of $G_\sigma$ in terms of skew Schur functions.

%\thmnow{\label{t:maincount} 
%For any skew shape $\sigma$,

%$$G_\sigma = \sum_{\mu\supseteq \sigma} (-1)^{|B(\mu/\sigma)|} a_{\sigma,\mu} \cdot s_\mu$$
%where the $a_{\sigma, \mu}$ are nonnegative integers. In fact $a_{\sigma,\mu}$ is the product of the following two integers:
%\enumnow{
%\item the number of row-weakly-bounded semistandard tableaux of shape of $A(\mu/\sigma)$, and
%\item the number of row-bounded, reverse row-strict tableaux of shape $B(\mu/\sigma)$ .
%}

%}

\begin{remark} \label{rem:connection-to-lenart}
Consider the row-bounded, reverse row-strict tableaux of shape $B(\mu/\sigma)$, as in (2) above. There is a bijection between this set and the set of row-bounded, row- and column- strictly-decreasing tableaux of shape $B(\mu/\sigma)$, obtained by replacing label $T(i,j)$ with $i-T(i,j)$.  Therefore, when $\sigma$ is a straight shape whose highest row is in row $1$, Theorem~\ref{thm:comb} reduces to \cite[Theorem 2.2]{lenart-combinatorial}.  In particular, $A(\mu/\sigma)$ is always empty in this case.  

We also note that when $N=|\sigma|+1$, Theorem~\ref{thm:comb} specialized to the monomial $x_1\cdots x_N$ is equivalent to \cite[Theorem 2.8]{clpt}.
Moreover a proof using a row insertion algorithm in the special case that $\sigma$ is a straight shape and $N=|\sigma|+1$  is presented in \cite[Proposition 3.9]{reiner-tenner-yong}.
\end{remark}

\remnow{\label{rem:connection-to-act}
The determinantal formula of \cite{anderson-chen-tarasca-k-classes} can also be expanded as a similar sum involving enumeration of standard young tableaux on larger skew shapes (see \cite[Theorem C]{anderson-chen-tarasca-k-classes}).
Thus, Theorem~\ref{thm:comb} establishes in a purely combinatorial manner that the determinantal formula in  \cite{anderson-chen-tarasca-k-classes} is equal to the number of set-valued tableaux.
}

\remnow{\label{rem:darij}
D.~Grinberg has pointed out the row-refined skew stable Grothendieck polynomials $RG_\sigma(\mathbf{x};\mathbf{w})$ in Definition~\ref{def:rg}, restricted to straight shapes $\sigma$, are Hall-dual to the power series $\tilde{g}_{\mu}(\mathbf{x};\mathbf{w})$ defined in \cite{galashin-grinberg-liu-refined}.  In other words, 
$$\langle RG_\sigma(\mathbf{x};\mathbf{w}), \tilde{g}_\mu(\mathbf{x};\mathbf{w})\rangle = \delta_{\sigma,\mu}$$
for straight shapes $\sigma$ and $\mu$. Here $\langle\cdot,\cdot\rangle$ denotes the Hall inner product on symmetric functions in variables $\mathbf{x}$, treating the $w_i$ as scalars. }

Now we prove Theorem~\ref{t:refinedcount'} using a new generalized row insertion algorithm.  This proof occupies the rest of the section.  This algorithm extends the set-valued insertion algorithm in \cite{bandlow-morse-combinatorial} to the case of skew shapes. 

\defnow{\label{def:rsk}(RSK row insertion) First, recall the row insertion operation, the atomic operation of the RSK algorithm \cite[\S7.11]{ec2} (we present a very slightly more general version). Suppose $\sigma$ is a skew or straight shape and $T$ is a semistandard tableau of shape $\sigma$.  Given $k\in \NN$ and $i$, the operation $T\leftarrow_i k$ inserts $k$ in the leftmost box of row $i$ labeled $j>k$, or a new box at the right end of the $i^\text{th}$ row if no box in that row is labeled $>k$ (in the case where there are not yet any boxes in that row, the new box is placed directly below the leftmost entry in the previous row).  In the latter case the operation terminates. In the former, we insert $j$ into the $(i+1)^\text{st}$ row of $\sigma$ in the same manner, and repeat down the rows of $\sigma$.  The {\em insertion path} is the sequence of boxes $b_{i,j_1},b_{i+1,j_2},\ldots$ in which insertions occurred; one can check that $j_1 \ge j_2 \ge \cdots$ \cite[Lemma 7.11.2]{ec2}.
}

In particular, row insertion inputs a semistandard tableau of shape $\sigma$ and outputs a semistandard tableau of shape $\sigma'$ obtained by adding one box to $\sigma$.

\begin{remark}
Notice that row insertion may be applied without changes to set-valued tableaux in the following situation.  Suppose $T$ is a set-valued semistandard tableau of shape $\sigma$.  Suppose $k$ is a label in a box $b$ with at least one other label; let $i$ index the row containing $b$.  Suppose further that every box in row $i+1,i+2,\ldots$ is labeled with a singleton set.  Then one may define the operation $T\leftarrow_i k$ as before, deleting $k$ from box $b$ and row-inserting it in the next row, and repeating.  Simply put, the row insertion path does not traverse any box with more than one label in this case.

This observation allows for the next algorithm.
\end{remark}

\begin{algorithm}\label{algorithm}
The {\em skew set-valued} row insertion algorithm, for a connected skew shape $\sigma$, is as follows.  The  input is
\enumnow{
\item a skew shape $\lambda \supseteq \sigma$ with $B(\lambda/\sigma)=\emptyset$,
\item $T'$ a reverse-row-strict, row-weakly-bounded tableau on $A(\lambda/\sigma)$, and
\item $T\in \Semi_{\bfc,\bfe-c(T')}(\lambda)$.
}
The output will be:
\enumnow{
\item a skew shape $\mu\supseteq \sigma$ with $A(\mu/\sigma)=\emptyset$,
\item $T''$ a reverse-row-strict, row-bounded tableau on $B(\mu/\sigma)$ with $c(T'')=\bfe$, and
\item $\wt{T} \in \Semi_{\bfc,\bfzr}(\mu)$.
}
\end{algorithm}

The algorithm proceeds as follows.  Let $r$ be the number of rows of $\sigma$. For each $k=r,\ldots,1$ (in descending order), we will do two ``sweeps'' of $\sigma$. First we sweep out all labels in row $k$ that are not the minimum in their box, via row-inserting them downward. Then we sweep out all labels in all (singly-labeled) boxes $b$ for which $T'(b) = k$, again via row insertion.  These boxes need not be in row $k$.  In the auxiliary labeling $T''$, the newly created boxes are labeled $k$, and properties of row insertion will imply that at most one box in each column of $T''$ is labeled $k$. An example is given in Example~\ref{ex:RSK}.  

Now we describe the algorithm more precisely. For $k=r,\ldots,1$, proceed as follows.  First, let $m$ be the maximum label in the rightmost box of row $k$ that has multiple labels.  Delete $m$ and insert $m$ into the leftmost box of row $k+1$ labeled $m_2>m$, or a new box at the right end of the ${k+1}^\text{st}$ row if no box in that row is labeled $>m$.  In the latter case the operation terminates; the new box is labeled $k$ in the auxiliary filling $T''$. In the former, we insert $m_2$ into the $(k+2)^\text{nd}$ row of $\sigma$ in the same manner, and repeat down the rows of $\sigma$.  This is the familiar row-insertion operation of Definition~\ref{def:rsk}.  The {\em insertion path} is the sequence of boxes $b_0 = (k,j_0),b_1=(k\!+\!1,j_1),\ldots$ in which insertions occurred; one can check that $j_0 \ge j_1 \ge \cdots$ (\cite[Lemma 7.11.2]{ec2}).  Repeat row-insertion on the maximum label in the rightmost non-singly valued box in row $k$, until that row has only singly-valued boxes.  

The second part of step $k$ is as follows.  Since $T'$ is row-weakly-bounded and reverse row-strict, it follows that there is at most one box $(i,j)\in A(\lambda/\sigma)$ in each row such that $T'(i,j)= k$; furthermore $i\ge k$ if so, so that $T(i,j)$ must consist of a single label.  So for each such box $(i,j)$, taken in order with $i$ increasing, delete the box and row-insert the label $T(i,j)$ it starting in row $i+1$.  When the operation terminates, the new box is labeled $k$ in the auxiliary filling $T''$.  We note for later use that in this stage, every box labeled $k$ in $T''$ is the leftmost of its row, since all boxes labelled $>k$ have already been removed.

\exnow{\label{ex:RSK}Let $$\sigma = \tiny \young(::\ ,::\ ,:\ \ ,\ \ ) \qquad \lambda = \tiny \young(:\ \ ,\ \ \ ,\ \ \ ,\ \ ) \qquad T' = \tiny \young(:1,21,1) \qquad T=
\ytableausetup{boxsize=.75cm} 
\begin{ytableau}
 \none & 2 &3\\
 1,4 & 6 & 8 \\
 5,7 & 9 & 10,\blue{13} \\
 11 & 12
 \end{ytableau}.
$$
The algorithm gives

$$\ytableausetup{boxsize=.5cm} 
 \begin{ytableau}
 \none & 2 &3\\
 1,4 & 6 & 8 \\
 5,\blue{7} & 9 & 10 \\
 11 & 12 & \red{13}
 \end{ytableau}
 \qquad
  \begin{ytableau}
 \none & 2 &3\\
 1,\blue{4} & 6 & 8 \\
 5 & 9 & 10 \\
 7 & 12 & \red{13}\\
 \red{11}
 \end{ytableau}
 \qquad
  \begin{ytableau}
 \none & 2 &3\\
 \blue{1} & 6 & 8 \\
 4 & 9 & 10 \\
 5 & 12 & \red{13} \\
 \red{7}\\
 \red{11}
 \end{ytableau}
 \qquad
  \begin{ytableau}
 \none & \blue{2} &3\\
 \none & 6 & 8 \\
 1 & 9 & 10 \\
 4 & 12 & \red{13} \\
 \red{5}\\
 \red{7}\\
 \red{11}
 \end{ytableau}
\qquad
  \begin{ytableau}
 \none & \none &3\\
 \none & \blue{2} & 8 \\
 1 & 6 & 10 \\
 4 & 9 & \red{13} \\
 \red{5} & \red{12}\\
 \red{7}\\
 \red{11}
 \end{ytableau}
 \qquad
  \begin{ytableau}
 \none & \none &3\\
 \none & \none & 8 \\
 \blue{1} & 2 & 10 \\
 4 & 6 & \red{13} \\
 \red{5} & \red{9}\\
 \red{7}& \red{12}\\
 \red{11}
 \end{ytableau}
\qquad
  \begin{ytableau}
 \none & \none &3\\
 \none & \none & 8 \\
 \none & 2 & 10 \\
 1 & 6 & \red{13} \\
 \red{4}& \red{9}\\
 \red{5} & \red{12}\\
 \red{7}\\
 \red{11}
 \end{ytableau}
 $$
and the auxiliary tableau $T''$ is $$\young(::3,31,21,2,1)$$

}

\begin{lemma}\label{lem:asclaimed}
The output of Algorithm~\ref{algorithm} takes the claimed form.
\end{lemma}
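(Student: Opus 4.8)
The plan is to verify each of the three asserted properties of the output in turn, tracking invariants through the two-sweep structure of the algorithm. The key observation driving everything is that row insertion, as recalled in Definition~\ref{def:rsk}, is a well-defined operation on semistandard tableaux and never traverses a multiply-labeled box \emph{provided} the label being inserted starts below all boxes with excess, which is precisely the situation arranged in each step of Algorithm~\ref{algorithm} (in the first sweep, all rows below $k$ have already been processed to be singly-valued; in the second sweep, the box $(i,j)$ with $T'(i,j)=k$ has $i\ge k$ and, since $\lambda$ is processed from the bottom, everything strictly below row $i$ is singly-valued). First I would record this as a standing invariant: \emph{after completing step $k$, every box in rows $k,k+1,\ldots$ of the current tableau is a singleton}, which follows by downward induction on $k$ together with the fact that each insertion path moves strictly downward and each terminating insertion creates a fresh singleton box.

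Next I would check that $\wt T$ is semistandard of the right content and excess. Semistandardness is preserved by each individual row insertion (\cite[\S7.11]{ec2}, in the mildly generalized skew form of Definition~\ref{def:rsk}); that the content $\bfc$ is unchanged is immediate since row insertion neither creates nor destroys labels, only relocates them; and that the excess is $\bfzr$ at the end follows from the invariant above applied with $k=1$, which forces every box of the final shape $\mu$ to be a singleton. I would also note here that $A(\mu/\sigma)=\emptyset$: the second sweep of step $k$ empties out exactly the boxes of $A(\lambda/\sigma)$ with $T'$-label $k$, and since $T'$ is a tableau on all of $A(\lambda/\sigma)$ with labels in $\{1,\ldots,r\}$, by the time $k=1$ is finished every box of $A(\lambda/\sigma)$ has been deleted; meanwhile new boxes are only ever created at the bottoms of insertion paths, hence below $\sigma$, so $\mu\supseteq\sigma$ with nothing above.

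Then I would verify the properties of $T''$. That $c(T'')=\bfe$ is a counting statement: each label removed from the input—whether a non-minimal label in row $k$ during the first sweep, or the single label of a box of $A(\lambda/\sigma)$ carrying $T'$-label $k$ during the second sweep—creates exactly one new box of $B(\mu/\sigma)$ labeled $k$ in $T''$, and the total number of such labels is $|e(T)| + |c(T')| = |\bfe-c(T')| + |c(T')| = |\bfe|$, with the count of $k$'s matching $\bfe_k$ by the refined bookkeeping. That $T''$ is row-bounded, i.e.\ a box in row $i$ of $B(\mu/\sigma)$ gets a label $<i$: a label $k$ is deposited in row $i$ only if an insertion path starting at row $k$ (first sweep) or row $i'+1$ with $i'\ge k$ (second sweep) terminates there, and since the path strictly descends and must pass through at least one new row below $\sigma$, we get $i > k$, hence $i \ge k+1 > k$. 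For reverse-row-strictness I would use that within a fixed step $k$ all new boxes labeled $k$ are at the bottoms of a family of insertion paths; the remark after Definition~\ref{def:rsk} that $j_0\ge j_1\ge\cdots$ along each path, combined with the standard fact that successive insertions of a weakly increasing sequence of values produce insertion paths that move weakly rightward at the bottom, shows the boxes labeled $k$ occupy distinct columns with the column strictly increasing as we go down $B(\mu/\sigma)$—and the parenthetical remark in the algorithm's description ("at most one box in each column of $T''$ is labeled $k$," and "every box labeled $k$ in $T''$ is the leftmost of its row") is exactly what pins down the comparison between consecutive values $k$ and $k+1$.

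The main obstacle I expect is the reverse-row-strictness of $T''$, specifically proving that a box labeled $k$ and a box labeled $k+1$ sitting in the same row or adjacent rows satisfy the strict-decrease-along-rows / weak-decrease-down-columns conditions. This requires a genuine analysis of how the insertion paths of step $k$ relate to the new boxes created in step $k+1$—one must show the step-$(k+1)$ boxes lie weakly to the right of and weakly below where the step-$k$ boxes will go, which in turn rests on a careful "no-crossing" argument for the accumulated family of insertion paths, of the type that appears in Bandlow--Morse \cite{bandlow-morse-combinatorial} and in Sagan--Stanley \cite{sagan-stanley-robinson}. Everything else is a direct, if slightly tedious, propagation of the semistandardness and bottom-of-path invariants.
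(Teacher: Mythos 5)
Your handling of the routine parts---the invariant that after step $k$ every box in rows $k,k+1,\ldots$ is a singleton (hence row insertion stays well defined), preservation of content, vanishing of the excess, $A(\mu/\sigma)=\emptyset$, the count $c(T'')=\bfe$, and row-boundedness of $T''$ from row-weak-boundedness of $T'$---matches the paper's proof. But the one point the paper identifies as the actual content of the lemma, reverse-row-strictness of $T''$, is where your sketch goes wrong, in direction and not just in detail.

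The paper reduces reverse-row-strictness to the claim that, for each fixed $k$, \emph{no two boxes labeled $k$ in $T''$ lie in the same row}. (This suffices because new boxes are always appended at the right end of a row and the bottom of a column, and steps are processed with $k$ decreasing, so rows and columns of $T''$ are automatically weakly decreasing; only strictness along rows needs an argument.) That claim is proved by observing that the labels swept out consecutively within step $k$ are inserted in weakly \emph{decreasing} order, or from leftmost boxes of successively lower rows, so each insertion path stays weakly to the \emph{left} of the previous one and therefore must terminate in a strictly \emph{lower} row. You instead invoke the bumping lemma for weakly \emph{increasing} insertions ("paths move weakly rightward") and conclude that the boxes labeled $k$ occupy distinct columns with column strictly increasing going down. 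That conclusion is false: in Example~\ref{ex:RSK} the boxes labeled $1$ in $T''$ sit at $(5,2)$, $(6,2)$, $(8,1)$---two of them share a column, and the columns weakly \emph{decrease} as you go down. It is also insufficient even if it were true, since "distinct columns" does not preclude two boxes with the same label in one row, which is exactly what strict decrease along rows forbids. Relatedly, you locate the main obstacle in relating the paths of step $k$ to the new boxes of step $k+1$; that cross-step comparison is the easy part (later boxes are further right in their row, hence carry strictly smaller labels once the per-$k$ claim is known), and the crux is the comparison of consecutive insertions \emph{within} a single step $k$. Finally, be wary of leaning on the parenthetical "at most one box in each column of $T''$ is labeled $k$" from the algorithm's informal description: the example above shows this is not what holds (the correct statement is "in each row"), and in any case it is the assertion to be proved, not a given.
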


\begin{proof}
We remark that the process in Algorithm~\ref{algorithm} preserves the property that {\em every box in row $k+1$ and below has exactly one label in it}, so the row-insertion is always well-defined.  The process also clearly preserves the content of the tableau $T$. Thus iterating the described two-step process for $k=r,\ldots,1$ produces the output data $\mu, T'',$ and $\wt T$, with $c(\wt{T}) = c(T)$. Furthermore $T''$ is row-bounded since $T'$ was row-weakly-bounded. To conclude that the output is as claimed, the only thing left to show is that the labeling $T''$ of $B(\mu/\sigma)$ is reverse row-strict.

 Indeed, since the rows are processed in the order $r,\ldots,1$ in Algorithm~\ref{algorithm}, it is enough to show that for a fixed $k\in\{1,\ldots,r\}$ that no two boxes labelled $k$ in $T''$ lie in the same row.  This follows from the standard fact that row-insertion paths move weakly to the left. Precisely: Suppose $m$ and $m'$ are labels that are processed consecutively in step $k$.  Let $b_0,b_1,\ldots b_M$ be the insertion path of $m$.  By assumption, after $m$ is inserted, every box $b_i$ except possibly $b_0$ is still singly labeled, and $\max(T(b_0)) < T(b_1) < \ldots < T(b_M)$. 

Furthermore, we claim that the label $m'$ is on or to the left of the insertion path of $m$.  Indeed, if $m'$ is also in row $k$, then this is clear since $m'<m$; otherwise, we simply note that $m'$ is in the leftmost box of its row, so the claim is also clear.  Finally, row-insertion of $m'$ preserves the property of being weakly left of the insertion path of $m$.  So the insertion path of $m'$ cannot end to the right of that of $m'$; thus it ends below that of $m'$. This concludes the proof of the lemma.
\end{proof}

Now we show that all possible outputs are attained bijectively by the algorithm.

\begin{prop} \label{p:main}
For any connected skew shape $\sigma$ and any $\bfc$ and $\bfe$, Algorithm~\ref{algorithm} produces a bijection between choices of 
\enumnow{
\item a skew shape $\lambda \supseteq \sigma$ with $B(\lambda/\sigma)=\emptyset$,
\item $T'$ a reverse-row-strict, row-weakly-bounded tableau on $A(\lambda/\sigma)$, and
\item $T\in \Semi_{\bfc,\bfe-c(T')}(\lambda)$;
}
and choices of
\enumnow{
\item a skew shape $\mu\supseteq \sigma$ with $A(\mu/\sigma)=\emptyset$,
\item $T''$ a reverse-row-strict, row-bounded tableau on $B(\mu/\sigma)$ with $c(T'')=\bfe$, and
\item $\wt{T} \in \Semi_{\bfc,\bfzr}(\mu)$.
}
Therefore, 

\begin{equation}\label{eq:ss}
\sum_{(\lambda, T')} |\Semi_{\bfc, \bfe-c(T')}(\lambda)| = \sum_{(\mu,T'')} |\Semi_{\bfc, \mathbf{0}}(\mu)|
\end{equation}
where
\begin{itemize}
\item the left hand sum ranges over all $\lambda\supseteq \sigma$ with $B(\lambda/\sigma)=\emptyset$, together with a reverse row-strict, row-weakly-bounded labeling $T'$ of $A(\lambda/\sigma)$, and
\item the right hand sum ranges over all $\mu\supseteq \sigma$ with $A(\mu/\sigma) = \emptyset$, together with a reverse row-strict, row-bounded labeling $T''$ of $B(\mu/\sigma)$.
\end{itemize}
\end{prop}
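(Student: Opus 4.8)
The plan is to prove Proposition~\ref{p:main} by establishing that Algorithm~\ref{algorithm} is invertible, exhibiting an explicit reverse algorithm. By Lemma~\ref{lem:asclaimed}, we already know the forward algorithm produces output of the claimed type, so the content of the proposition is bijectivity, and the displayed identity \eqref{eq:ss} then follows immediately by counting, summing over all possible $(\lambda,T')$ on the left and noting that $\bfe - c(T')$ is determined once $\bfe$ is fixed (the excess on the left, plus the content of $T'$, reconstitutes $\bfe$, which on the right is recorded as $c(T'')$).

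First I would set up the reverse procedure, which runs the ``sweeps'' in the opposite order: for $k = 1, \ldots, r$ (ascending), and within each $k$, reversing the two stages of step $k$ of the forward algorithm in reverse order. The key atomic operation is \emph{reverse row insertion} (reverse bumping), which is the standard inverse to Definition~\ref{def:rsk}: given a box known to have been newly created, eject its label, bump it up into the previous row by finding the rightmost entry strictly smaller than it, and continue upward. The crucial structural facts I would invoke are: (a) the boxes of $B(\mu/\sigma)$ labelled $k$ in $T''$ are exactly the terminal boxes of the insertion paths performed during step $k$ of the forward algorithm, and by Lemma~\ref{lem:asclaimed} there is at most one in each column; (b) within step $k$, the paths were performed in a definite order (first the multi-label sweep of row $k$, processing from the rightmost multiply-labeled box and taking the maximum label; then the $T'$-sweep, processing boxes $(i,j)$ with $T'(i,j)=k$ in order of increasing $i$), so to invert we undo them in the exact reverse order. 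The reverse-row-strictness and row-boundedness of $T''$, together with the weak-left-movement of insertion paths, is precisely what guarantees we can always identify which box-labelled-$k$ to reverse-bump next and that the bumped-out label lands in the correct place — either re-inserted as a non-minimal label in a box of row $k$ (first stage, reversed) or placed in $A(\lambda/\sigma)$ with $T'$-label $k$ (second stage, reversed).

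The heart of the argument is checking that the reverse procedure is well-defined and that forward-then-reverse and reverse-then-forward are each the identity. For well-definedness I would verify the invariant dual to the one in Lemma~\ref{lem:asclaimed}: at the start of reverse-step $k$, every box in rows $k+1, k+2, \ldots$ is singly-labeled (so reverse bumping upward past row $k$ is unobstructed), and every box of $T''$ still labelled $<k$ sits at the left end of its row. I would then argue that a single reverse-bump exactly undoes a single forward-bump — this is the classical fact that row insertion and reverse row insertion are mutually inverse when one remembers the location of the new/removed box — and that the ordering conventions make the two processes traverse the same sequence of paths in opposite order. A subtle point worth spelling out: when we reverse-bump a label out of row $k$ during the first stage, we must reinsert it as the \emph{maximum} label in the \emph{rightmost} appropriate box, matching exactly how the forward algorithm chose ``$m$ = the maximum label in the rightmost box of row $k$ with multiple labels''; the monotonicity of successive insertion paths (each subsequent path lies weakly left of the previous, \cite[Lemma 7.11.2]{ec2}) is what makes this choice unambiguous and consistent.

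The main obstacle I expect is bookkeeping around the \emph{interleaving of the two stages within a single step $k$} and across steps — specifically, confirming that a path created during the $T'$-sweep of step $k$ does not interfere with correctly inverting a path from the row-$k$ multi-label sweep, and that paths from step $k$ don't get confused with paths from step $k-1$ (which in the reverse order is processed later). The label $k$ in $T''$ is the unique marker distinguishing step-$k$ paths, and the fact (noted in the forward description) that every box labelled $k$ in $T''$ arising from the second stage is the leftmost of its row, while those from the first stage terminate further right, should let me peel off the second-stage paths before the first-stage ones in reverse. I would organize this as a clean induction on $k$ (with an inner induction on the number of paths within step $k$), stating the loop invariant once and checking it is preserved; once the invariant is in hand, each individual reverse-bump being inverse to a forward-bump is routine and can be cited rather than recomputed. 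Finally, \eqref{eq:ss} is a formal consequence of the bijection.
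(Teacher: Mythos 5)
Your proposal is correct and takes essentially the same approach as the paper: the paper also proves bijectivity by exhibiting an explicit inverse that, for $k=1,\ldots,r$, deletes the boxes of $B(\mu/\sigma)$ labelled $k$ in $T''$ from bottom to top and reverse-row-inserts their labels upward, stopping at row $k$ (undoing a multi-label bump) or terminating in a new box of $A(\lambda/\sigma)$ labelled $k$ in $T'$ (undoing a $T'$-sweep bump). The only cosmetic difference is that the paper determines the reverse processing order purely by row number of the $k$-labelled boxes—justified by the same monotonicity of successive insertion paths you cite—rather than by separating the two stages via the leftmost-of-row property.
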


\begin{proof}
 The skew set-valued row-insertion algorithm in Algorithm~\ref{algorithm} constructs a map
\begin{equation}\label{eq:ssmap}
\coprod_{(\lambda, T')} \Semi_{\bfc, \bfe-c(T')}(\lambda) \xrightarrow{\text{RSK}_\sigma}\coprod_{(\mu,T'')} \Semi_{\bfc, \mathbf{0}}(\mu),
\end{equation}
where the conditions on $\lambda,\mu,T',$ and $T''$ are as in the statement of the proposition. We claim this map is a bijection, and it suffices to provide an inverse.  The inverse may be described algorithmically as follows.  Given $\mu, T'',$ and $\wt T$ satisfying conditions (1), (2), and (3) described as the output of Algorithm~\ref{algorithm}, perform the following procedure for $k=1,\ldots, r$.  Consider the boxes of $B(\mu/\sigma)$ labelled $k$ in $T''$, in order from highest to lowest row number (i.e.~lowest to highest on the page).  For each such box $b$, delete $b$ and inverse-row-insert its label $m$ upwards, stopping if it reaches row $k$.  If the label $m$ lands in a new box $b'$, necessarily in row $\ge k$, then set $T'(b) = k$.  An example is given in Example~\ref{ex:RSK}, read in reverse.

The resulting tableau $T'$ is reverse-row-strict by an argument  analogous to Lemma~\ref{lem:asclaimed}.  So the result of this procedure is the data $\lambda,T',$ and $T$ satisfying the conditions (1), (2), and (3) described as the input  of Algorithm~\ref{algorithm}. 
Now it is evident that the procedure described is in fact inverse to the RSK map in Algorithm~\ref{algorithm}, since each upwards insertion operation is inverse to row insertion, and it processes boxes in the reverse order.
\end{proof}

Now we state %the following 
a Definition and 
Lemma, which will be used to prove Theorem~\ref{t:refinedcount}. We will postpone its proof until after the the proof of Theorem~\ref{t:refinedcount}.  The need for Definition~\ref{def:acyclic} as a hypothesis for Lemma~\ref{lem:Gseq} was pointed out to us by D.~Grinberg.

\defnow{\label{def:acyclic}
Let $P$ be any finite poset, with its set of cover relations $C = \{(x,y)\in P\times P: x\lessdot y\}$ partitioned into two disjoint sets $C=G\sqcup B$ (called {\em good} and {\em bad}, colloquially).  We will say that the partition is {\em acyclic} if the directed graph on the Hasse diagram of $P$ obtained by orienting all good cover relations up and all bad cover relations down is acyclic.
}
\exnow{The partition $G\sqcup B$ of the cover relations in the Hasse diagram is drawn below is {\em not} acyclic.
$$\xymatrix@R=6mm@C=6mm{&\bullet \ar@{-}[dl]_G\ar@{-}[dr]^B&\\ \bullet \ar@{-}[dr]_G && \bullet \ar@{-}[dl]^B\\ &\bullet&}$$
}
\lemnow{\label{lem:Gseq}
Let $P$ be any finite poset, 
with its set of cover relations $C = \{(x,y)\in P\times P: x\lessdot y\}$.  Let $C=G\sqcup B$ be an acyclic partition of $C$, with elements of $G$ and $B$ called {\em good} and {\em bad} throughout.  
Say that an increasing sequence
$$\mathcal{I} = (\emptyset = I_0 \subsetneq \cdots \subsetneq I_\ell = P)$$
of order ideals $I_i$ is a {\em $G$-sequence} if for every $i=1,\ldots,\ell$, the only cover relations within $I_i \setminus I_{i-1}$ are in $G$.  Precisely: if $x,y\in I_i \setminus I_{i-1}$ and $x\lessdot y$ then $(x,y)\in G$.  The length of such a $G$-sequence $\mathcal{I}$ is defined to be $|\mathcal{I}|=\ell$.
Then

\begin{equation}\label{eq:signedcount}
\sum_\text{$\mathcal{I}$ a $G$-sequence} \! (-1)^{|\mathcal{I}|} = \begin{cases}
(-1)^{|P|} & \text{if }G=\emptyset,\\
0 &\text{otherwise.}
\end{cases}
\end{equation}
}

\exnow{
Suppose $P = P(\lambda)$ is the poset of boxes of a diagram $\lambda$.  If $G=\emptyset$ then the $G$-sequences are in natural correspondence with increasing tableaux of shape $\lambda$ with label set $\{1,\ldots,N\}$ for some $N$.  Lemma~\ref{lem:Gseq} states that counting these increasing tableaux, with sign according to the parity of $N$, is $(-1)^{|P|}$.  For example, if $P=P(\tiny \yng(3,1))$ then the Lemma states that 
$$(-1)^4 = \#\left\{\young(123,4),\young(124,3), \young(134,2) \right\} - \#\left\{\young(123,3) ,\young(123,2)\right\}.$$
}

Postponing the proof of Lemma~\ref{lem:Gseq}, we now prove Theorem~\ref{t:refinedcount'}.  

\begin{proof}[Proof of Theorem~\ref{t:refinedcount'}]  
%The proof is by induction on $|\bfe|$ with $|\bfe|=0$ being clear.  
We fix $\sigma$ and sequences $\bfc$ and $\bfe$, with $|\bfc| = |\sigma| + |\bfe|$; otherwise the statement is trivial.    Now isolating the term  $|\Semi_{\bfc, \bfe}(\sigma)|$ on the left of Equation~\eqref{eq:ss}, we have
\begin{equation}\label{eq:notyetrecursed}
|\Semi_{\bfc, \bfe}(\sigma)| = \sum_{(\mu,T'')} |\Semi_{\bfc, \mathbf{0}}(\mu)| - \sum_{(\lambda, T'): \lambda\supsetneq \mu } |\Semi_{\bfc, \bfe-c(T')}(\lambda)|
\end{equation}
where the conditions on $(\mu,T'')$ and $(\lambda,T')$ are as stated in Proposition~\ref{p:main}.  Now we may use Proposition~\ref{p:main} inductively to expand each of the terms $|\Semi_{\bfc, \bfe-c(T')}(\lambda)|$ in the second sum of Equation~\eqref{eq:notyetrecursed}.  We obtain
\begin{equation}\label{eq:nowrecursed}
|\Semi_{\bfc, \bfe}(\sigma)| = \sum_{(\mu,T',T'')} b_{\sigma,\mu,T',T''} \cdot |\Semi_{\bfc, \mathbf{0}}(\mu)|,\end{equation}
 for some coefficients $b$ which we will soon study. Here
\itemnow{
\item $\mu\supseteq \sigma$ is a skew shape,
\item $T'$ is {\em any} row-weakly-bounded filling of $A(\mu/\sigma)$, 
\item $T''$ is a reverse-row-strict, row-bounded filling of $B(\mu/\sigma)$,
}
such that
$$c(T') + c(T'') = \bfe.$$

To prove Theorem~\ref{t:refinedcount'} it is enough to show that the coefficients on the right hand side are given by
$$b_{\sigma,\mu,T',T''} = \begin{cases}
(-1)^{|A(\mu/\sigma)|} &\text{ if $T'$ is semistandard}, \\ 
0 & \text{ otherwise.}
\end{cases}$$

Indeed, it follows from the recursive expansion of Equation~\eqref{eq:notyetrecursed} that the coefficient $b_{\sigma,\mu,T',T''}$ depends only on $T'$: it is
the signed count of the number of ways to build $T'$ as a sequence of tableaux
$$\emptyset = T_0 \subsetneq T_1 \subsetneq \cdots \subsetneq T_\ell = T'$$
on a corresponding sequence of skew shapes
$$\emptyset = \lambda_0 \subsetneq \lambda_1 \subsetneq \cdots \subsetneq \lambda_\ell = A(\mu/\sigma)$$
 such that each $\lambda_i$ is obtained from $\lambda_{i-1}$ by adding boxes on the left or above boxes of $\lambda_{i-1}$, and the restriction of $T_i$ to $\lambda_i/\lambda_{i-1}$ is reverse row-strict for each $i$.  By the {\em signed count}, we mean that such a sequence is counted with sign $(-1)^\ell.$   

For example, a filling $T' =$  $$\young(21,22)$$ can be obtained in the following ways, with the following signs:

\begin{center}
\begin{tabular}{cccccr}
$\young(\ \ ,\ \ )$ & $ \young(\ \ ,\ 2)$ & $ \young(\ \ ,22)$ & $\young(\ 1,22)$ & $\young(21,22)$ &$+$\\
\vspace{.1cm}\\
$\young(\ \ ,\ \ )$ & $ \young(\ \ ,\ 2)$ & $ \young(\ 1,\ 2)$ & $\young(\ 1,22)$ & $\young(21,22)$ & $+$\\
\vspace{.1cm}\\
&$\young(\ \ ,\ \ )$ & $ \young(\ \ ,\ 2)$ & $ \young(\ 1,22)$ & $\young(21,22)$ &$-$\\
\vspace{.1cm}\\
&$\young(\ \ ,\ \ ) $ & $ \young(\ \ ,\ 2)$ & $ \young(\ \ ,22)$ & $\young(21,22)$ & $-$
\end{tabular}
\end{center}

\noindent and so $b_{\sigma,\mu,T',T''} = 0$ for this $T'$.

Thus, to compute $b_{\sigma,\mu,T',T''}$ in general, we let $P=P(A(\mu/\sigma))$ be the poset whose elements are boxes of $A(\mu/\sigma)$ and such that $b_1\lessdot b_2$ if and only if box $b_1$ is directly to the right of or directly below $b_2$. Now let $G$ be the subset of cover relations $b_1\lessdot b_2$ of $P$ in which either
\itemnow{
\item $b_1$ is directly to the right of $b_2$ and $T'(b_2) > T'(b_1)$, or
\item $b_1$ is directly below $b_2$ and $T'(b_2) \ge T'(b_1)$.
}
Let $B$ be the set of cover relations not in $G$.  Observe that $G$ and $B$ are an acyclic partition of the cover relations, in the sense of Definition~\ref{def:acyclic}.  Indeed, a cycle in the oriented Hasse diagram, say on boxes $b_0,b_1,\ldots,b_t=b_0$ would correspond to a sequence of inequalities $T(b_0) \le T(b_1) \le \cdots \le T(b_t) = T(b_0)$, and since the boxes $b_0,\ldots,b_t$ occupy more than one row and one column,  at least one (in fact at least two) of those $t$ inequalities are strict, which is clearly impossible.
Then by Lemma~\ref{lem:Gseq} it follows that 
$$b_{\sigma,\mu,T',T''} = \begin{cases}
(-1)^{|A(\mu/\sigma)|} &\text{ if }G=\emptyset, \\ 
0 & \text{ otherwise.}
\end{cases}$$
But $G=\emptyset$ means precisely that $T'$ is semistandard.  
\end{proof}

It remains only to prove Lemma~\ref{lem:Gseq}.
\begin{proof}[Proof of Lemma~\ref{lem:Gseq}] We prove Lemma~\ref{lem:Gseq} by induction on $|P|$, with $P=\emptyset$ being obvious.  

Write $J(P)$ for the set of order ideals of $P$. 
We break up \eqref{eq:signedcount} according to the first order ideal $I_1$ and proceed inductively on $P\setminus I_1$.  Start with the equality
\begin{equation}\label{eq:first}
\sum_\text{$\mathcal{I}$ a $G$-sequence} \! (-1)^{|\mathcal{I}|} = \sum_{\emptyset\ne A \in J(P)}  \,\,\sum_{\stackrel{\text{$\mathcal{I}$ a $G$-sequence}}{I_1=A}} (-1)^{|\mathcal{I}|}.
\end{equation}
Notice that for any order ideal $A$, the partition on the cover relations of $P\setminus A$ induced by $G\sqcup B$ is again acyclic.  Then by induction, the nonzero contributions to the right hand side of~\eqref{eq:first}  come from nonempty order ideals $A$ in which 
\itemnow{
\item all cover relations inside $A$ are good,
\item all cover relations inside $P\setminus A$ are bad.
}
Let $\mathcal{A}$ be the set of nonempty order ideals of $P$ satisfying these conditions.  Then using~\eqref{eq:signedcount} inductively,~\eqref{eq:first} becomes
\begin{equation}\label{eq:second}
\sum_\text{$\mathcal{I}$ a $G$-sequence} \! (-1)^{|\mathcal{I}|} = \sum_{A\in \mathcal{A}}  \,\,(-1)\cdot (-1)^{|P\setminus A|}.
\end{equation}

It remains to identify $\mathcal{A}$ in terms of $P$ and $G$, which we do as follows. Let $Y$ be the maximal up-closed subset of $P$ such that all cover relations within $Y$ are bad.  Note that $Y$ is uniquely defined, since if $Y_1$ and $Y_2$ are up-closed subsets satisfying that condition, then $Y_1\cup Y_2$ also satisfies the condition.  

Let $X = P\setminus Y$.  Let $Y'\subseteq Y$ be the subset consisting of the minimal elements $y\in Y$ satisfying that if $x\lessdot y$ then $(x,y)\in G$. 
Then we claim 
\begin{claim}\label{c:A}\mbox{}\enumnow{
\item If $X = \emptyset$ then $\mathcal{A} = \{I \subseteq \min(P): I\ne \emptyset\}$, where $\min(P)$ denotes the minimal elements of $P$.
\item If $X\ne \emptyset$ and some cover relation within $X$ is in $B$, then $\mathcal{A}=\emptyset$.
\item If $X\ne \emptyset$ and all cover relations within $X$ are in $G$, then
$$\mathcal{A} = \{X\cup I: I\subseteq Y'\}.$$
}
\end{claim}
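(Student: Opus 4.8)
The plan is to recall the characterization of $\mathcal{A}$: it consists of the nonempty order ideals $A$ such that all cover relations inside $A$ are good and all cover relations inside $P\setminus A$ are bad. Since $A$ is an order ideal and $P\setminus A$ is an up-closed set all of whose internal cover relations are bad, by the maximality defining $Y$ we must have $P\setminus A\subseteq Y$, equivalently $X=P\setminus Y\subseteq A$. So every $A\in\mathcal{A}$ contains $X$. This immediately splits into the three cases of the claim.

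First I would dispose of case (2): if $X\ne\emptyset$ and some cover relation within $X$ is bad, then since $X\subseteq A$ for every candidate $A$, that bad cover relation lies inside $A$, contradicting the requirement that all cover relations inside $A$ be good; hence $\mathcal{A}=\emptyset$. For case (1), when $X=\emptyset$ we have $P=Y$, so the condition ``$X\subseteq A$'' is vacuous and $\mathcal{A}$ is the set of all nonempty order ideals $A$ with all internal cover relations good. I claim such an $A$ must be an antichain: if $x\lessdot y$ with both in $A$, the cover relation is internal to $A$ hence good, but also—here is where I would use acyclicity of the partition together with $P=Y$, i.e.\ that $P$ itself has \emph{all} its cover relations within the up-closed set $Y$ bad—wait, that is only the cover relations \emph{within} $Y$; since $Y=P$ this says every cover relation of $P$ is bad, contradicting that $(x,y)$ is good. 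So no such $x\lessdot y$ exists, meaning $A\subseteq\min(P)$; conversely any nonempty subset of $\min(P)$ is an order ideal with no internal cover relations at all, hence lies in $\mathcal{A}$. This gives $\mathcal{A}=\{I\subseteq\min(P):I\ne\emptyset\}$.

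The substantive case is (3). Here $X\ne\emptyset$ and all cover relations within $X$ are good. Since every $A\in\mathcal{A}$ satisfies $X\subseteq A$ and $P\setminus A\subseteq Y$, write $A=X\cup I$ with $I\subseteq Y$; I need to identify exactly which $I$ arise. The condition ``all internal cover relations of $A=X\cup I$ are good'' and ``all internal cover relations of $P\setminus A=Y\setminus I$ are bad,'' combined with $X$ being down-closed and already good internally and $Y$ being up-closed, should force $I\subseteq Y'$, and conversely. The forward direction: if $y\in I$ and $x\lessdot y$, then $x\in A$ (as $A$ is an order ideal), so the cover $(x,y)$ is internal to $A$, hence good; thus $y$ satisfies the defining property of elements of $Y'$, and it remains to check $y$ is among the \emph{minimal} such elements—but $Y'$ was defined as the minimal elements of $Y$ with that property, so I must also rule out $y\in I$ with some $y''\lessdot y$, $y''\in Y$, $y''\notin Y'$; the key point is that $I$ being an order ideal inside $Y$ forces $y''\in I$ too when $y''\lessdot y$ and $y''\in Y$, and then iterating down I reach an element of $Y$ violating goodness inside $A$ unless the whole descending chain sits in $Y'$—this is the delicate bookkeeping. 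The converse direction: given $I\subseteq Y'$, one checks $X\cup I$ is an order ideal (every element below $I$ lies in $X$ by definition of $Y'$ and minimality), all its internal covers are good (covers inside $X$ are good by hypothesis; covers from $X$ into $I$ are good by the defining property of $Y'$; there are no covers inside $I$ since... again this needs that distinct elements of $Y'$ are incomparable or at least that a cover between them would be bad, contradicting the $Y'$ property—another delicate point), and all internal covers of $Y\setminus I$ are bad by maximality of $Y$.

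The main obstacle is precisely this last case, specifically pinning down that the admissible $I$ are exactly the order ideals (within $P$) contained in $Y'$, and reconciling the ``minimal'' in the definition of $Y'$ with the order-ideal condition on $A$. I expect the crux is the following lemma to extract first: any order ideal $A$ of $P$ with all internal covers good must, within $Y$, be contained in $Y'$—because if $y\in A\cap Y$ with $y\notin Y'$, then either $y$ fails the ``all covers into it are good'' property (but it can't, since those covers are internal to $A$) or $y$ is not minimal among such, so there is $y''<y$, $y''\in Y$, also failing to be in $Y'$ for the ``minimality'' reason, and chasing this chain downward must terminate at an element of $Y$ that has a bad cover from below coming from within $A$, a contradiction. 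Getting this descent argument clean—and symmetrically checking the converse construction produces a valid member of $\mathcal{A}$—is the real work; everything else is immediate from the characterization of $\mathcal{A}$ and the definitions of $X$, $Y$, $Y'$.
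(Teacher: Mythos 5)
Your overall structure matches the paper's: cases (1) and (2) are handled correctly, and your derivation of $X\subseteq A$ for every $A\in\mathcal{A}$ (via the observation that $P\setminus A$ is an up-closed set with all internal covers bad, hence contained in $Y$ by the maximality characterizing $Y$) is in fact slightly slicker than the paper's route, which instead shows that each maximal element of $X$ admits a good cover into $Y$ and therefore must lie in $A$. But case (3) is left genuinely unfinished: you repeatedly flag the reconciliation of the word ``minimal'' in the definition of $Y'$ with the order-ideal condition on $A$ as ``delicate bookkeeping'' and ``the real work,'' and neither the proposed downward chain-chasing nor the verification that distinct elements of $Y'$ are incomparable is ever carried out. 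As written, neither containment of $\mathcal{A}=\{X\cup I: I\subseteq Y'\}$ is actually established.

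The missing step is short, and is essentially what the paper supplies. First, for $A\in\mathcal{A}$ the set $A\cap Y$ is an antichain: a cover relation between two of its elements would lie inside $A$ (hence be good) and inside $Y$ (hence be bad), a contradiction; since $A$ is down-closed, this forces $A\cap Y\subseteq\min(Y)$, and since each $y\in A\cap Y$ receives only good covers from below (those covers lie inside $A$), we get $A\cap Y\subseteq Y'$ with no descent argument needed. Second, for the reverse containment, note that any $y\in Y\setminus\min(Y)$ is covered from below by some element of $Y$ (take a saturated chain in $P$ from a smaller element of $Y$ up to $y$; it stays inside $Y$ because $Y$ is up-closed), and that cover is bad; hence every $y\in Y$ all of whose covers from below are good already lies in $\min(Y)$. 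Consequently $Y'$ is automatically an antichain contained in $\min(Y)$, every element strictly below an element of $Y'$ lies in $X$, and for any $I\subseteq Y'$ the set $X\cup I$ is a nonempty order ideal whose internal covers are all good (covers inside $X$ by hypothesis, covers from $X$ into $I$ by definition of $Y'$, and no covers inside $I$), while all covers inside $Y\setminus I$ are bad. So $X\cup I\in\mathcal{A}$, which disposes of both ``delicate points'' you identified.
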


\begin{proof}[Proof of Claim~\ref{c:A}.] If $X=\emptyset$ then $G=\emptyset$, and $\mathcal{A}$ then consists of all nonempty order ideals with no cover relations within them. So part (1) follows.

Suppose $X\ne \emptyset$.  Suppose $A\in \mathcal{A}$. Now for each maximal element $x\in X$, there is some $y\in Y$ such that $x\lessdot y$ is good.  So necessarily $x\in A$, since otherwise the covering relation $x\lessdot y$ would lie in $P\setminus A$.  So $A$ contains all maximal elements of $X$; thus $A\supseteq X$.   So if some cover relation within $X$ is in $B$, then $\mathcal{A}=\emptyset$, proving part (2).  

Otherwise, we see that $X\in\mathcal{A}$.  Furthermore, if $A\in\mathcal{A}$ then $A\cap Y$ must be an antichain in $Y$; otherwise $A$ contains a bad cover relation.  So $A\cap Y\subseteq \min(Y)$.  And if $y\in A\cap Y$, then any cover relation $x\lessdot y$ must be good.  We conclude that $\mathcal{A}  \subseteq \{X\cup I: I\subseteq Y'\}$;  the reverse containment also clearly follows.
\end{proof}

Now from Claim~\ref{c:A}, the rest of the proof of Lemma~\ref{lem:Gseq} can be deduced from~\eqref{eq:second} by using the obvious identity 
$\sum_{S\subseteq T} (-1)^{|S|} = 0$
for nonempty finite sets $T$.  Explicitly, in the case \ref{c:A}(1), Equation~\eqref{eq:second} becomes 
$$\sum_{\emptyset\ne A \subseteq \min(P)} \,\,(-1)\cdot(-1)^{|P\setminus A|} = (-1)^{|P|}.$$
In the case \ref{c:A}(2), Equation~\eqref{eq:second} is the empty sum.  
In the case \ref{c:A}(3), Equation~\eqref{eq:second} becomes
$$\sum_{I\subseteq Y'} \,\,(-1)\cdot(-1)^{|P\setminus(X \cup I)|} = (-1)^{|P|+1}\sum_{I\subseteq Y'} (-1)^{|I|},$$
which is $0$, as desired, provided that $Y'\ne \emptyset$. It remains only to show that $Y'=\emptyset$ is not possible.

If $Y'=\emptyset$ then by definition of $Y'$, every minimal element $y\in Y$ covers some $x\in X$ such that $x\lessdot y$ is bad. And every maximal element $x\in X$ is covered by some $y \in Y$ such that $x\lessdot y$ is good.  Therefore in the orientation of the Hasse diagram of $P$ described in Definition~\ref{def:acyclic}, {\em every} element in $X$ sends out an upwards edge, and {\em every} element in $Y$ sends out a downwards edge. Therefore the oriented Hasse diagram has no sink and cannot be acyclic, contradicting the hypotheses.

\end{proof}

\section{An inverse formula}

We give an analogous linear expansion of skew Schur functions into skew stable Grothendieck polynomials. This formula generalizes \cite[Theorem 2.7]{lenart-combinatorial}, which pertains to straight shapes, and visibly specializes to that result when $\sigma$ is a straight shape. %When $\sigma$ is a non-skew shape then~\eqref{eq:gsigma-conj} immediately specializes to that theorem.

\begin{theorem}\label{t:back}
For any connected skew shape $\sigma$, %write $\hat{\sigma}$ for the maximal straight shape obtained by adding all boxes that are both above and left of boxes of $\sigma$ (i.e.~``filling in the upper left shadow of $\sigma$'') as well as adding at most $i$ boxes to the right of row $i+1$.
\begin{equation}\label{eq:gsigma-conj}
s_\sigma = \sum_{\mu\supseteq \sigma} (-1)^{|A(\mu/\sigma)|} b_{\sigma,\mu} \cdot G_\mu
\end{equation}
where the $b_{\sigma, \mu}$ is the product of the following two numbers:
\enumnow{
\item the number of row-weakly-bounded, reverse-row-strict tableaux of shape $A( \mu/\sigma)$, and 
\item the number of row-bounded, semistandard tableaux of shape $B(\mu/\sigma)$.
}
\end{theorem}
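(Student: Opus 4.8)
The plan is to mimic the strategy used for Theorem~\ref{t:refinedcount'}, but inverting the roles of the two sides. First I would look for a ``dual'' version of the bijection of Proposition~\ref{p:main} — or, more precisely, I would argue that the recursive identity \eqref{eq:ss} obtained there already contains everything needed, once read in the right direction. Recall that Proposition~\ref{p:main} gave, after isolating the $\sigma$ term on the left of \eqref{eq:ss} (see \eqref{eq:notyetrecursed}),
$$|\Semi_{\bfc,\bfzr}(\sigma)| = \sum_{(\mu,T'')} |\Semi_{\bfc,\bfzr}(\mu)|\text{-type terms} \pm \text{(corrections)},$$
but now I want to go the other way: express a \emph{single} $|\Semi_{\bfc,\bfzr}(\sigma)|$ (that is, a plain semistandard count, which is what $s_\sigma$ records) as a signed sum over $|\Semi_{\bfc,\bfe}(\mu)|$-type terms (which is what the $G_\mu$ record). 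Concretely, by the definitions in Definition~\ref{def:schur-groth}, the coefficient of $\bfx^{\bfc}$ in $\sum_\mu (-1)^{|A(\mu/\sigma)|} b_{\sigma,\mu} G_\mu$ is $\sum_{\mu,\bfe} (-1)^{|A(\mu/\sigma)|+|\bfe|} b_{\sigma,\mu} |\Semi_{\bfc,\bfe}(\mu)|$, and I must show this equals $|\Semi_{\bfc,\bfzr}(\sigma)|$ for every $\bfc$.

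The key steps, in order, are: (1) Set up the coefficient-wise reformulation of \eqref{eq:gsigma-conj}, analogous to Theorem~\ref{t:refinedcount'}, turning the claim into an identity among counts of semistandard set-valued tableaux; note that the sign $(-1)^{|\bfe|}$ from $G_\mu$ will recombine with $(-1)^{|A(\mu/\sigma)|}$ in exactly the way that makes the $B$-below/$A$-above bookkeeping swap compared to Theorem~\ref{t:refinedcount'} — this is why in Theorem~\ref{t:back} condition (1) is \emph{reverse-row-strict} on $A$ and condition (2) is \emph{semistandard} on $B$, i.e.\ the strictness/semistandardness hypotheses on the two auxiliary tableaux have traded places relative to Theorem~\ref{t:refinedcount}. (2) Run the same generalized row-insertion bijection of Algorithm~\ref{algorithm} and Proposition~\ref{p:main}, but now solve the resulting recursion \eqref{eq:ss} for the term indexed by $\bfe = \bfzr$ on the \emph{right} rather than for the term indexed by $\lambda = \sigma$ on the left; this expresses $|\Semi_{\bfc,\bfzr}(\mu)|$ for $\mu=\sigma$ in terms of $|\Semi_{\bfc,\bfe}(\lambda)|$ over $\lambda\supseteq\sigma$ (with $B(\lambda/\sigma)=\emptyset$) together with the auxiliary $T'$ data. (3) Iterate this new recursion, exactly as in the passage from \eqref{eq:notyetrecursed} to \eqref{eq:nowrecursed}, collecting all contributions over chains of skew shapes, now built below $\sigma$; the signed count of such chains is again governed by Lemma~\ref{lem:Gseq}, applied to the poset $P(B(\mu/\sigma))$ with the good/bad partition read off from whether the relevant inequality in $T''$ is strict or weak. (4) Conclude that the surviving coefficient is $(-1)^{|A(\mu/\sigma)|}$ times the number of reverse-row-strict row-weakly-bounded fillings $T'$ of $A(\mu/\sigma)$, times the number of semistandard row-bounded fillings $T''$ of $B(\mu/\sigma)$, matching $b_{\sigma,\mu}$; here the semistandardness on $B$ emerges as the ``$G=\emptyset$'' condition of Lemma~\ref{lem:Gseq} after the sign swap, just as semistandardness on $A$ did in the proof of Theorem~\ref{t:refinedcount'}.

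I expect the main obstacle to be Step (2): correctly re-indexing the recursion \eqref{eq:ss} so that it is solved for $\mu = \sigma$ rather than $\lambda = \sigma$, and verifying that the induction is well-founded in this direction (the shapes $\lambda$ now \emph{grow} above $\sigma$, so one must check the relevant quantities stabilize and the recursion terminates — it does, because $|\Semi_{\bfc,\bfe}(\lambda)|$ vanishes once $|A(\lambda/\sigma)|$ exceeds $|\bfe|$, bounding the depth of the recursion). A secondary subtlety is getting the good/bad partition of the cover relations of $P(B(\mu/\sigma))$ right — it should be the mirror image of the partition used for $P(A(\mu/\sigma))$ in the proof of Theorem~\ref{t:refinedcount'}, with ``$>$'' and ``$\ge$'' interchanged between the horizontal and vertical cover relations — and then invoking Lemma~\ref{lem:Gseq} verbatim. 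Once these two points are handled, the remainder is a routine transcription of the argument already given for Theorem~\ref{t:refinedcount'}, and I would present it as such, pointing to the earlier proof for the details of the insertion bijection and the acyclicity check.
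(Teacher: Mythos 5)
Your plan founders at Step (2), and the obstruction is structural rather than technical. In the recursion \eqref{eq:ss}, the quantity you need to isolate, $|\Semi_{\bfc,\bfzr}(\sigma)|$, occurs on the right-hand side only in the instance $\bfe=\bfzr$ (it is the term $\mu=\sigma$, $T''=\emptyset$, and $T''$ is required to have content $\bfe$); but when $\bfe=\bfzr$ the left-hand side also collapses to the single term $\lambda=\sigma$, $T'=\emptyset$, so \eqref{eq:ss} degenerates to $|\Semi_{\bfc,\bfzr}(\sigma)|=|\Semi_{\bfc,\bfzr}(\sigma)|$ and there is nothing to solve for. There is no non-degenerate instance of the insertion identity in which $|\Semi_{\bfc,\bfzr}(\sigma)|$ sits alongside positive-excess counts on larger shapes, so the recursion cannot simply be ``read in the other direction.'' Indeed, extracting the coefficient of $\bfx^{\bfc}$ from \eqref{eq:gsigma-conj} shows that for $|\bfc|>|\sigma|$ the claim is the pure cancellation $0=\sum_{\mu,\bfe}(-1)^{|A(\mu/\sigma)|+|\bfe|}\,b_{\sigma,\mu}\,|\Semi_{\bfc,\bfe}(\mu)|$, and a statement of this shape cannot be obtained by isolating one term of a positive enumeration and moving the rest across the equals sign, which is all that your Steps (2)--(3) do. Consequently the appeal to Lemma~\ref{lem:Gseq} on $P(B(\mu/\sigma))$ in Steps (3)--(4) has no recursion of the form \eqref{eq:notyetrecursed} to attach to.

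The intended (and much shorter) route is to take Theorem~\ref{thm:comb} as already proved and verify that the claimed expansion composes with it to the identity: substitute the expansion of each $G_\mu$ into skew Schur functions $s_\lambda$ ($\lambda\supseteq\mu$) into the right side of \eqref{eq:gsigma-conj}, observe that the resulting product of coefficients counts fillings of $A(\lambda/\sigma)$ and of $B(\lambda/\sigma)$ that are semistandard on one side of the intermediate shape $\mu$ and reverse-row-strict on the other, and then show that for fixed $\lambda\neq\sigma$ and fixed fillings the signed sum over the intermediate shapes $\mu$ vanishes. That final cancellation is not Lemma~\ref{lem:Gseq} but a separate and easier statement (Lemma~\ref{lem:ss-rrs}): for any nonempty filled skew shape, the signed count of divisions into a semistandard order ideal and a reverse-row-strict complement is zero, essentially because the boxes that are free to go to either side form a nonempty antichain. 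If you insist on a proof in the spirit of your proposal, you would instead need to build a sign-reversing involution directly on the set of triples $(\mu,(T',T''),T)$ with $T\in\Semi_{\bfc,\bfe}(\mu)$; that is a genuinely new construction which your outline does not supply.
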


\begin{proof}
Fix $\sigma$. By Theorem~\ref{thm:comb}, the right hand side of~\eqref{eq:gsigma-conj} is
\begin{equation}\label{eq:a-b}
\sum_{\mu\supseteq \sigma} \sum_{\lambda\supseteq \mu} (-1)^{|A(\mu/\sigma)|+|B(\lambda/\mu)|} a_{\lambda,\mu} \cdot b_{\mu,\sigma} \cdot s_\lambda
%= & \sum_{\mu\supseteq \sigma} \sum_{\lambda\supseteq \mu} (-1)^{|A(\mu/\sigma)|+|B(\lambda/\mu)|} c_{\lambda,\sigma}\cdot s_\lambda
\end{equation}
And $a_{\lambda,\mu} \cdot b_{\mu,\sigma}$ is the product of the sizes of the following two sets:
\begin{enumerate}
\item the row-weakly-bounded tableaux $T'$ on $A(\lambda/\sigma)$ which are semistandard on $A(\lambda/\mu)$ and reverse-row-strict on $A(\mu/\sigma)$, and
\item the row-bounded tableaux $T''$ on $B(\lambda/\sigma)$ which are reverse-row-strict on $B(\lambda/\mu)$ and semistandard on $B(\mu/\sigma)$.
\end{enumerate} 

Now for a fixed $\lambda$, a row-weakly-bounded filling $T'$ of $A(\lambda/\sigma)$, and a row-bounded filling $T''$ of $B(\lambda/\sigma)$, the contribution of the triple $(\lambda,T',T'')$ to the sum~\eqref{eq:a-b} is
\begin{equation}\label{eq:should-vanish}
\sum_{\substack{\mu\text{ with } \sigma\subseteq \mu\subseteq \lambda \\T',T''\text{ satisfy }(1),(2)}}
 (-1)^{|A(\mu/\sigma)|+|B(\lambda/\mu)|}\cdot  s_\lambda
 \end{equation}

Now if $\lambda = \sigma$ then~\eqref{eq:should-vanish} equals $s_\sigma$ vacuously.  Otherwise, we show that~\eqref{eq:should-vanish} vanishes. In words, the coefficient of $s_\lambda$ in \eqref{eq:should-vanish} is a signed count of ways to divide the two shapes $A(\lambda/\sigma)$ and $B(\lambda/\sigma)$, such that in each, the upper-left is semistandard and the lower-right is reverse-row-strict.  Then the following lemma, applied to $A(\lambda/\sigma)$ and $B(\lambda/\sigma)$, finishes the proof of Theorem~\ref{t:back}. 
\end{proof}

\begin{lemma}\label{lem:ss-rrs}
Let $\mu$ be any nonempty skew shape. A {\em division} of $\mu$ is a partition $\mu = \mu_S \sqcup \mu_R$ into two skew shapes such that $\mu_S$ is an order ideal of $\mu$ considered as a poset (Definition~\ref{def:young}).  In other words, no box of $\mu_R$ is north/west of any box of $\mu_S$.
Suppose $T$ is a (non-set-valued) tableau on $\mu$.  Say that $(\mu_S,\mu_R)$ is {\em allowed} by $T$ if $T$ is semistandard on $\mu_S$ and reverse-row-strict on $\mu_R$.  Then
$$\sum_{(\mu_S,\mu_R) \text{ allowed by }T} (-1)^{|\mu_S|} = 0.$$
\end{lemma}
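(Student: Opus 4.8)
The plan is to realize the signed sum as a sign-weighted Euler characteristic and make it vanish via the same machinery as Lemma~\ref{lem:Gseq}, or, more efficiently, by finding a sign-reversing involution on the set of allowed divisions. Let me outline the involution approach first, since it is the cleanest. Fix the tableau $T$ on $\mu$. Given an allowed division $(\mu_S,\mu_R)$, I want to toggle a single ``pivotal'' box between $\mu_S$ and $\mu_R$ to change $|\mu_S|$ by one while preserving allowedness. The natural candidate is to scan the boxes of $\mu$ in some canonical total order (say reading order: top-to-bottom, left-to-right) and look for the first box $b$ whose membership can be flipped. For $b$ currently a maximal box of $\mu_S$ (so $\mu_S\setminus\{b\}$ is still an order ideal), removing $b$ from $\mu_S$ is allowed iff $T$ restricted to $\mu_R\cup\{b\}$ is still reverse-row-strict, which by the order-ideal condition only involves comparing $T(b)$ with the box to its right and the box below. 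Conversely for $b$ a minimal box of $\mu_R$, adding it to $\mu_S$ is allowed iff the semistandard condition on $\mu_S\cup\{b\}$ holds, involving the box to the left and the box above.

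The key observation that should drive the involution is that a single box $b$ sitting on the boundary between $\mu_S$ and $\mu_R$ is ``semistandard-compatible with $\mu_S$'' or ``reverse-row-strict-compatible with $\mu_R$,'' and I want to argue that at least one boundary box always admits a flip. Concretely: consider the Hasse-diagram orientation in the style of Definition~\ref{def:acyclic}, orienting a horizontal cover $b_1 \lessdot b_2$ ($b_1$ right of $b_2$) ``good'' when $T(b_2) > T(b_1)$ and a vertical cover ``good'' when $T(b_2) \ge T(b_1)$; exactly as in the proof of Theorem~\ref{t:refinedcount'}, this is an acyclic partition because any cycle would force a chain of (weak) inequalities $T(b_0)\le\cdots\le T(b_0)$ with at least one strict step. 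Then a division $(\mu_S,\mu_R)$ is allowed precisely when every cover relation inside $\mu_S$ is good and every cover relation inside $\mu_R$ is bad — this is exactly the condition defining the set $\mathcal A$ of Claim~\ref{c:A} (with $\mu_S$ playing the role of $A$, $\mu_R$ the role of $P\setminus A$). So the allowed divisions are literally the members of $\mathcal A$, and Lemma~\ref{lem:Gseq}'s internal computation already shows $\sum_{A\in\mathcal A}(-1)^{|A|}=0$ whenever $P\ne\emptyset$ and $G\ne\emptyset$. I therefore reduce to checking that $G\ne\emptyset$, i.e.\ that $\mu$ nonempty forces at least one good cover relation — but if every cover relation were bad then the whole oriented Hasse diagram points downward, which is acyclic but has $\mu$ itself (all of it) as an up-closed all-bad set, so $X=\emptyset$ in the notation of the proof, and then $\mathcal A=\{I\subseteq\min(\mu): I\ne\emptyset\}$ and $\sum_{A\in\mathcal A}(-1)^{|A|}=\sum_{\emptyset\ne I\subseteq\min(\mu)}(-1)^{|I|}=-1\ne 0$ — wait, that is the wrong sign. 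I need to recheck: the claim of the lemma is the sum is $0$, which for $G=\emptyset$ the computation in Lemma~\ref{lem:Gseq} gives $(-1)^{|P|}\ne 0$. So the clean reduction is: \emph{allowed divisions of $\mu$ by $T$ are the elements $A\in\mathcal A$ for the poset $P=P(\mu)$ with the good/bad partition above, and I must show $G\ne\emptyset$ so that we land in case \ref{c:A}(3) or \ref{c:A}(2) (both give $0$) rather than case \ref{c:A}(1).}

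So the main obstacle, and the only real content, is: \textbf{show that a nonempty skew shape $\mu$ always carries at least one good cover relation} for this particular partition — equivalently, $G\ne\emptyset$. I would prove this directly: take the box $b$ of $\mu$ with $T(b)$ minimal among all boxes, breaking ties by, say, choosing $b$ northwesternmost among minimizers. If $b$ has a box $b'$ immediately to its right, then $T(b') \ge T(b)$ and by the tie-break $T(b') > T(b)$, so the cover $b \lessdot b'$... no — horizontal cover is $b_1\lessdot b_2$ with $b_1$ to the \emph{right}, so it's $b' \lessdot b$, good iff $T(b) > T(b')$, which fails. Let me instead pick $b$ with $T(b)$ \emph{maximal}, tie-broken northwesternmost. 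If $b$ has a neighbor $b'$ immediately to its left, then $b\lessdot b'$ is horizontal with $b$ on the right, good iff $T(b') > T(b)$: true by maximality and tie-break. If $b$ has no left-neighbor in $\mu$ but has an upper-neighbor $b''$, then $b \lessdot b''$ is vertical, good iff $T(b'') \ge T(b)$: true. If $b$ has neither a left- nor an upper-neighbor in $\mu$, then $b$ is a minimal element of the poset; since $\mu$ is connected and nonempty with $|\mu|\ge 1$, either $\mu=\{b\}$ (then $P$ has no cover relations at all, the sum over allowed divisions is over $\{\emptyset,\{b\}\}$ wait $\mu_S=\emptyset$ gives $\mu_R=\{b\}$ trivially reverse-row-strict, $\mu_S=\{b\}$ trivially semistandard, sum $= (-1)^0+(-1)^1 = 0$, done by hand) — and if $|\mu|\ge 2$, connectedness gives $b$ a right- or lower-neighbor, so $b$ is not the global maximizer's... hmm, this needs the maximizer to have been chosen so that it does have a left- or upper-neighbor. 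I would handle this by choosing $b$ to be the maximizer of $T$ that is, among all maximizers, chosen to be a \emph{non-minimal} element of $P$ if any maximizer is non-minimal; since $P$ is connected with $\ge 2$ elements it has a non-minimal element, and I can argue a maximizer can be taken non-minimal (walk up-left from any maximizer staying within maximizers is not guaranteed, so instead: if every maximizer is minimal in $P$, take any non-minimal $c$; it covers something, and... ) — the tidy fix is to separately dispatch $|\mu|=1$ by hand and for $|\mu|\ge 2$ note $P$ connected implies there is a cover relation $x\lessdot y$; among all boxes appearing in some cover relation, pick the one with $T$ maximal, and run the left-neighbor/upper-neighbor argument on it. I expect this last bookkeeping to be the fiddly part, but it is elementary; once $G\ne\emptyset$ is in hand, Lemma~\ref{lem:Gseq} (cases \ref{c:A}(2) and \ref{c:A}(3) of its proof) closes the argument immediately.
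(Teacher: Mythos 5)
Your reduction to Claim~\ref{c:A} is a workable idea, but the step you isolate as ``the only real content'' --- showing $G\ne\emptyset$ --- is a claim that is simply false, and the place where your computation produced ``the wrong sign'' is precisely where the real repair is needed. With your convention (a cover is good when the two boxes are reverse-row-strict, as in the proof of Theorem~\ref{t:refinedcount'}), any semistandard filling of $\mu$ --- for instance $\mu$ a single row of two boxes filled $1,2$ --- makes \emph{every} cover bad, so $G=\emptyset$ occurs for a nonempty connected shape. Your attempted proof of $G\ne\emptyset$ reverses an inequality: for $b$ a $T$-maximizer with left neighbour $b'$, maximality gives $T(b')\le T(b)$ (and the tie-break gives strict inequality), so the cover $b\lessdot b'$ is \emph{bad}; no choice of extremal box can fix this, because the statement itself fails. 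What actually rescues the $G=\emptyset$ case is a division you dropped: the set of allowed divisions is not literally $\mathcal{A}$, since the division in which the part playing the role of $A$ is \emph{empty} is allowed exactly when $G=\emptyset$, while $\mathcal{A}$ contains only nonempty ideals. That omitted term contributes the $+1$ cancelling the $-1$ you computed, so the correct conclusion from your sign check was ``I am missing a term,'' not ``$G=\emptyset$ cannot happen.'' (There is also a role swap to sort out: for the poset $P$ of Theorem~\ref{t:refinedcount'} it is $\mu_R$, not $\mu_S$, that is an order ideal of $P$, and the good/bad dictionary must be matched accordingly.)

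The repair within your framework is short: handle $G=\emptyset$ separately, where the allowed divisions are all $2^{|\min(P)|}$ subsets of the relevant antichain (including the empty one) and the alternating sum vanishes; for $G\ne\emptyset$ your appeal to cases (2) and (3) of Claim~\ref{c:A}, together with the acyclicity check you correctly carry over from Theorem~\ref{t:refinedcount'}, does close the argument. For comparison, the paper proves the lemma directly and more simply: each cover relation forces one of its two boxes into $\mu_S$ or into $\mu_R$; whenever any allowed division exists, the unforced boxes can be assigned to either side independently, so the signed sum is $\pm(1-1)^{N}$ where $N$ is the number of unforced boxes; and an upper-rightmost box of maximal $T$-value is always unforced. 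Note the key existence statement there is of an \emph{unconstrained} box (one that can go either way), not of a good cover relation --- these are different assertions, and only the former is true in general.
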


\exnow{\label{ex:strip}If $T = (a_1,\ldots,a_n)$ is a tableau on a single row of length $n$, then the above sum is empty unless $a_1 \le \cdots \le a_M > \cdots > a_n$ for a unique index $M$. In this case, there are two allowable divisions: where $\mu_S$ is either the first $M$ or the first $M-1$ boxes.}

\begin{proof}[Proof of Lemma~\ref{lem:ss-rrs}]
Each cover relation $b_1 \lessdot b_2$ in $\mu$ may be labelled $S$ or $R$ according to whether the restriction of $T$ to the $2$-box shape $\{b_1,b_2\}$ is semistandard or is reverse-row-strict. In the first case, write $S$ in box $b_1$; in the second case, write $R$ in box $b_2$. In this way, fill the boxes of $\mu$ using the alphabet $\{\emptyset, S, R, SR\}$.  (In Example~\ref{ex:strip}, the first $M-1$ boxes are $S$, the next box is empty, and the remaining are $R$.)  

Now if there are any allowable divisions at all, then no box is labelled $SR$, the $S$ boxes must be an order ideal, the boxes containing $R$ must be the complement of an order ideal, and the empty boxes form an antichain in between.  Then $(\mu_S,\mu_R)$ is allowable if and only if $\mu_S$ contains all $S$ boxes and $\mu_R$ contains all $R$ boxes.  Then to prove the lemma, it is enough to show that there exists at least one empty box.  Consider, among all boxes with maximum number in $T$, an upper-rightmost one.  That box has empty label.
\end{proof}

\bigskip

\noindent {\bf Acknowledgments.} We thank Vic Reiner, Bridget Tenner and Alexander Yong for generously answering some questions over email about their work \cite{reiner-tenner-yong}. We also thank Jennifer Morse, Travis Scrimshaw, and an anonymous referee for helpful contextual remarks and references.   We are grateful to Darij Grinberg for correcting an earlier error in the formulation of Lemma~\ref{lem:Gseq}, and for other detailed comments.
MC was supported by an NSA Young Investigators Grant and NSF DMS-1701924.

\bibliographystyle{amsalpha}
\bibliography{../my}

\end{document}